\begin{document}

\newtheorem{definition}{Definition}[section]
\newtheorem{definitions}[definition]{Definitions}
\newtheorem{deflem}[definition]{Definition and Lemma}
\newtheorem{lemma}[definition]{Lemma}
\newtheorem{pro}[definition]{Proposition}
\newtheorem{theorem}[definition]{Theorem}
\newtheorem{cor}[definition]{Corollary}
\newtheorem{cors}[definition]{Corollaries}
\theoremstyle{remark}
\newtheorem{remark}[definition]{Remark}
\theoremstyle{remark}
\newtheorem{remarks}[definition]{Remarks}
\theoremstyle{remark}
\newtheorem{notation}[definition]{Notation}
\theoremstyle{remark}
\newtheorem{example}[definition]{Example}
\theoremstyle{remark}
\newtheorem{examples}[definition]{Examples}
\theoremstyle{remark}
\newtheorem{dgram}[definition]{Diagram}
\theoremstyle{remark}
\newtheorem{fact}[definition]{Fact}
\theoremstyle{remark}
\newtheorem{illust}[definition]{Illustration}
\theoremstyle{remark}
\newtheorem{rmk}[definition]{Remark}
\theoremstyle{definition}
\newtheorem{que}[definition]{Question}
\theoremstyle{definition}
\newtheorem{conj}[definition]{Conjecture}

\renewenvironment{proof}{\noindent {\bf{Proof.}}}{\hspace*{3mm}{$\Box$}{\vspace{9pt}}}
\title{\textsc{On the Grothendieck ring of varieties}}
\keywords{Grothendieck ring; varieties; birational geometry; monoid ring}
	\subjclass[2010]{14F42, 14E05, 16Y60, 20M25}
\maketitle
\begin{center}
		AMIT KUBER\footnote{Email \texttt{amit.kuber@postgrad.manchester.ac.uk}.
		Research partially supported by a School of Mathematics, University of Manchester Scholarship.},

		\medskip

		School of Mathematics, \ University of Manchester, \\
		Manchester M13 9PL, \ England.
	\end{center}

\begin{abstract}
Let $\operatorname{K}_0(\operatorname{Var}_k)$ denote the Grothendieck ring of $k$-varieties over an algebraically closed field $k$. Larsen and Lunts asked if two $k$-varieties having the same class in $\operatorname{K}_0 (\operatorname{Var}_k)$ are piecewise isomorphic. Gromov asked if a birational self-map of a $k$-variety can be extended to a piecewise automorphism. We show that these two questions are equivalent over any algebraically closed field. If these two questions admit a positive answer, then we prove that its underlying abelian group is a free abelian group. Furthermore, if $\mathfrak B$ denotes the multiplicative monoid of birational equivalence classes of irreducible $k$-varieties then we also prove that the associated graded ring of the Grothendieck ring is the monoid ring $\mathbb Z[\mathfrak B]$.
\end{abstract}

\section{Introduction}\label{intro}
\newcommand\Q{\operatorname{K}_0(\mathcal S_k)}
\newcommand\V[1][]{\operatorname{Var}^{#1}_{k}}
\newcommand\K{\operatorname{K}_0(\V)}
\newcommand\G{\operatorname{K}^+_0(\V)}
\newcommand\D{\overline{\operatorname{Constr}}(k)}
\newcommand\F[1][]{\operatorname{Constr}^{#1}(k)}
Let $k$ be a field. A variety over $k$ is a reduced separated scheme of finite type. A subvariety of a variety $X$ is said to be locally closed if it can be written as the intersection of an open subvariety with a closed subvariety. Let $\V$ denote the category of $k$-varieties. The Grothendieck group $\G$ is the quotient of the free abelian group generated by the isomorphism classes of $k$-varieties by the following relations.
\begin{equation}\label{genrelpre}
[X]-[Y]=[X\setminus Y]\mbox{ whenever }Y\subseteq X\mbox{ is a closed subvariety}.
\end{equation}
It can be given a ring structure by taking the reduced product $(X\times_{\mathrm{Spec}\,k}Y)_{red}$ of varieties. Recall that if the field $k$ is algebraically closed, then we can simply talk about the product $X\times_{\mathrm{Spec}\,k}Y$. We denote the Grothendieck ring of varieties by $\K$.

Recall that if $R$ is a commutative ring, then an $R$-valued motivic measure is a ring homomorphism $\K\to R$. The Grothendieck ring plays an important role in the theory of motivic integration being the value ring of the universal motivic measure on $k$-varieties. But very little is known about this ring. Poonen \cite[Theorem~1]{Poonen} and Koll\'{a}r \cite[Example~6]{Kol} show that this ring is not a domain when $k$ has characteristic $0$.

Characterizing equality in the Grothendieck ring is an important issue. We need some notation to state this problem precisely.

Set $[n]=\{1,2,\hdots,n\}$ for each $n\in\mathbb Z_{>0}$. Two varieties $X$ and $Y$ are said to be \textbf{piecewise isomorphic}, written $X\doteqdot Y$, if there are partitions $X=\bigsqcup_{i\in[n]}X_i$ and $Y=\bigsqcup_{j\in[n]}Y_j$ of $X$ and $Y$ into locally closed subvarieties such that there is a permutation $\sigma$ of $[n]$ with $X_i$ isomorphic to $Y_{\sigma(i)}$ as a variety. %Equivalently, $X_i$ and $Y_j$ can be chosen to be constructible subsets of $X$ and $Y$ respectively.

If $X\doteqdot Y$, then clearly $[X]=[Y]$ in $\K$. Larsen and Lunts asked whether the converse is true.
\begin{que}\protect{\cite[Question~1.2]{LL}}\label{LLQ}
Suppose $X$ and $Y$ are two $k$-varieties such that $[X]=[Y]$ in $\K$. Is it true that $X\doteqdot Y$?
\end{que}
In the case when $k$ is algebraically closed, we reformulate this question as the cancellative property of the Grothendieck semiring $\mathcal S_k$ of piecewise isomorphic classes of $k$-varieties in Question \ref{semiringcancel}.

Liu and Sebag answered this question over an algebraically closed field of characteristic $0$ for varieties with dimension at most one (\cite[Propositions~5,\,6]{LS}) and for some classes of dimension two varieties (\cite[Theorems~4,\,5]{LS}). Sebag extended this result further (\cite[Theorem~3.3]{Seb}).

This question is quite natural and has many important applications to birational geometry. Consider the following question asked by Gromov as an example.
\begin{que}\protect{\cite[\S 3.G$'''$]{Gro}}\label{G}
Let $X$ and $Y$ be algebraic varieties which admit an embedding into a third one, say $X\hookrightarrow Z$ and $Y\hookrightarrow Z$, such that the complements $Z\setminus X$ and $Z\setminus Y$ are biregularly isomorphic. How far are $X$ and $Y$ from being birationally equivalent? Under what conditions are $X$ and $Y$ piecewise isomorphic?
\end{que}

Lamy and Sebag \cite{LamSeb} studied the following conjectural reformulation of this question in characteristic $0$.
\begin{conj}[see \protect{\cite[Conjecture~1]{LamSeb}}]\label{GG}
Let $k$ be an algebraically closed field and let $X$ be a $k$-variety. Let $\phi:X\dashrightarrow X$ be a birational map. Then it is possible to extend the map $\phi$ to a piecewise automorphism of $X$.
\end{conj}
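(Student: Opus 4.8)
I would prove Conjecture~\ref{GG} by induction on $d=\dim X$, carrying along the stronger statement $(\star_d)$: \emph{any two $k$-varieties of dimension at most $d$ whose classes in $\K$ agree are piecewise isomorphic.} The inductive step rests on a characteristic-free reduction. Pick a dense open $U\subseteq X$ on which $\phi$ restricts to an isomorphism $\phi|_U\colon U\xrightarrow{\ \sim\ }V$ onto an open subvariety $V=\phi(U)$, and put $Z=X\setminus U$ and $Z'=X\setminus V$. These are closed of dimension $<d$, and since $[U]=[V]$, relation~\eqref{genrelpre} forces $[Z]=[Z']$ in $\K$. If $(\star_{d-1})$ holds it supplies a piecewise isomorphism $Z\doteqdot Z'$; gluing it with $\phi|_U$ produces a piecewise automorphism of $X$ agreeing with $\phi$ on the dense open $U$, hence an extension of $\phi$. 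Thus Conjecture~\ref{GG} in dimension $d$ follows from $(\star_{d-1})$.

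For the base of the induction, $d=0$ is trivial: a $0$-dimensional variety is a finite set of closed points, its class is its cardinality (detected by any $\mathbb Z$-valued motivic measure), and over the algebraically closed field $k$ equal cardinality means isomorphic. The case $d=1$ in characteristic $0$ is the theorem of Liu and Sebag \cite[Propositions~5,\,6]{LS}, which I would simply invoke. Consequently, in characteristic $0$, Conjecture~\ref{GG} for surfaces already follows, and in general the conjecture is reduced to the inductive bootstrap ``$(\star_{d-1})\Rightarrow(\star_d)$''.

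For that bootstrap I would use resolution of singularities, the weak factorization theorem, and the presentation of $\K$ by classes of smooth projective varieties modulo blow-up relations (Bittner). Given $[A]=[B]$ with $\dim A,\dim B\le d$, one stratifies and resolves to rewrite this identity as a finite $\mathbb Z$-linear combination of elementary relations $[\operatorname{Bl}_W Y]-[E]=[Y]-[W]$ attached to blow-ups of smooth centres $W$ in smooth projective varieties $Y$. Each such relation promotes to an honest piecewise isomorphism: the exceptional divisor $E$ is the projectivization of the rank-$c$ normal bundle of $W$, with $c=\operatorname{codim}_Y W$, hence a Zariski-locally trivial $\mathbb P^{c-1}$-bundle over $W$, so $\operatorname{Bl}_W Y\doteqdot(Y\setminus W)\sqcup(W\times\mathbb P^{c-1})$. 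Since every centre $W$ has dimension $<d$, the inductive hypothesis $(\star_{d-1})$ applies to the centres, and the remaining task is to move the negative terms across, fuse the accumulated pieces $W_i\times\mathbb P^{c_i-1}$ into one another, and cancel common $\mathbb P^{c_i-1}$-factors, ending with a piecewise isomorphism $A\doteqdot B$.

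The main obstacle is precisely this last cancellation. Promoting a single blow-up relation is routine, but chaining them reintroduces, with signs, a family of lower-dimensional products $W_i\times\mathbb P^{c_i-1}$, and nothing forces these to cancel piecewise rather than merely in $\K$ --- making them do so is exactly Question~\ref{LLQ} in dimension $d$, which is open for $d\ge2$. The weak-factorization step must also accommodate moves contracting a divisor onto a lower-dimensional locus while keeping a coherent piecewise isomorphism in hand, so one has to track the exceptional loci along the entire chain. In positive characteristic the difficulty is compounded, since neither resolution, nor weak factorization, nor the curve case of Liu--Sebag is available, and one would need a different device altogether --- for instance a direct handle on the finitely many isomorphism types occurring among constructible pieces. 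I therefore expect this plan to settle Conjecture~\ref{GG} outright for $\dim X\le2$ in characteristic $0$, and to yield partial answers in dimension $3$ from the known partial results on Question~\ref{LLQ} for surfaces, with the general case hinging on genuinely new input at the cancellation step.
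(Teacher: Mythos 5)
This statement is a conjecture, not a theorem of the paper: the paper never proves Conjecture~\ref{GG}, it only proves (Theorem~\ref{GRRMGM}) that Conjecture~\ref{GG} implies a positive answer to Question~\ref{LLQ}, the converse implication being known from Lamy--Sebag. So there is no ``paper proof'' for your attempt to match, and any complete argument here would be a solution of an open problem. Your proposal is not such a solution, and to your credit you say so: the inductive reduction you describe (restrict $\phi$ to an open $U\cong V$, note $[X\setminus U]=[X\setminus V]$ in $\operatorname{K}_0(\operatorname{Var}_k)$, and invoke piecewise isomorphism in dimension $<d$ to glue) is exactly the already-known implication ``Question~\ref{LLQ} in dimension $d-1$ implies Conjecture~\ref{GG} in dimension $d$,'' while the bootstrap $(\star_{d-1})\Rightarrow(\star_d)$ that would close the induction is precisely Question~\ref{LLQ} itself, open for $d\ge 2$.

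The concrete gap is the cancellation step you flag at the end: after rewriting $[A]=[B]$ via Bittner's blow-up relations, the equality of the accumulated terms $W_i\times\mathbb P^{c_i-1}$ holds only in the Grothendieck group, and promoting that equality to a piecewise isomorphism is not a ``remaining task'' but the full strength of Question~\ref{LLQ} in dimension $d$; nothing in weak factorization or resolution supplies it, and in positive characteristic even those tools are unavailable while the conjecture is stated over an arbitrary algebraically closed field. Your paragraph on dimensions $0$, $1$ and surfaces in characteristic $0$ correctly recovers known cases (Liu--Sebag, Sebag, Lamy--Sebag), but as a proof of the statement as posed the argument is circular at its core. Note also that the direction actually established in this paper runs the other way -- Conjecture~\ref{GG} $\Rightarrow$ Question~\ref{LLQ}, via the matching procedure of Proposition~\ref{matching} -- so your sketch neither proves the conjecture nor reproduces the paper's contribution concerning it.
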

It is known (see \cite{LamSeb}) that a positive answer to Question \ref{LLQ} will settle this conjecture in the affirmative. We prove the converse in Theorem \ref{GRRMGM} showing that the two statements are in fact equivalent. This may be known to the experts, but we could not find a proof in the literature.

Larsen and Lunts \cite{LL} obtained an important motivic measure described in the following theorem.
\begin{theorem}[\protect{\cite[Theorem~2.3]{LL}}]
Suppose $k$ is an algebraically closed field of characteristic $0$. Let $\mathfrak{sb}$ denote the multiplicative monoid of stable birational equivalence classes of irreducible varieties. There exists a unique surjective ring homomorphism $\Psi:\K\to\mathbb Z[\mathfrak{sb}]$ that assigns to the class in $\K$ of a smooth irreducible proper variety its stable birational equivalence class in $\mathbb Z[\mathfrak{sb}]$.
\end{theorem}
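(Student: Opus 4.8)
The plan is to construct $\Psi$ from Bittner's presentation of the Grothendieck ring in characteristic zero: $\K$ is generated as an abelian group by the classes $[X]$ of smooth projective $k$-varieties, subject to the relations $[\emptyset]=0$ and the blow-up relation
\[
[\operatorname{Bl}_Y X]-[E]=[X]-[Y],
\]
where $X$ is smooth projective, $Y\subseteq X$ is a smooth closed subvariety, $\operatorname{Bl}_Y X$ is the blow-up of $X$ along $Y$, and $E\subseteq\operatorname{Bl}_Y X$ is the exceptional divisor; moreover the ring structure is recovered from $[X]\cdot[X']=[X\times X']$. It is through this presentation that characteristic zero enters in an essential way, via Hironaka's resolution of singularities and the weak factorization theorem. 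First I would define $\widetilde\Psi$ on the free abelian group on isomorphism classes of smooth projective $k$-varieties by $[X]\mapsto\sum_i\langle X_i\rangle$, where the $X_i$ are the connected components of $X$ — smooth and irreducible, since $k$ is algebraically closed — and $\langle\,\cdot\,\rangle$ denotes the stable birational class viewed as a basis element of $\mathbb Z[\mathfrak{sb}]$. This is well-defined on isomorphism classes, and I extend it $\mathbb Z$-linearly.

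The core of the argument is checking that $\widetilde\Psi$ annihilates the blow-up relation, so that it descends to a group homomorphism $\Psi\colon\K\to\mathbb Z[\mathfrak{sb}]$. Working one connected component of $X$ at a time (and decomposing $Y$ and $E$ accordingly), I would use two geometric facts: the blow-up morphism $\operatorname{Bl}_Y X\to X$ is birational, so $\langle\operatorname{Bl}_Y X\rangle=\langle X\rangle$; and the exceptional divisor $E=\mathbb P(N_{Y/X})$ is a Zariski-locally trivial $\mathbb P^{c-1}$-bundle over $Y$, where $c=\operatorname{codim}_X Y$, hence birational to $Y\times\mathbb P^{c-1}$ and therefore stably birational to $Y$, so $\langle E\rangle=\langle Y\rangle$. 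Together these give $\widetilde\Psi([\operatorname{Bl}_Y X])-\widetilde\Psi([E])=\langle X\rangle-\langle Y\rangle=\widetilde\Psi([X])-\widetilde\Psi([Y])$, while $\widetilde\Psi([\emptyset])=0$ is the empty sum. That $\Psi$ is a ring homomorphism then follows from $\Psi([\operatorname{Spec}k])=\langle\operatorname{pt}\rangle=1$ together with the fact that over an algebraically closed field the connected components of $X\times X'$ are the (irreducible) products $X_i\times X'_j$, on which the monoid law of $\mathfrak{sb}$ reads $\langle X_i\times X'_j\rangle=\langle X_i\rangle\langle X'_j\rangle$; distributing the sums yields $\Psi([X][X'])=\Psi([X])\Psi([X'])$.

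Surjectivity comes again from resolution of singularities: every irreducible $k$-variety is birational, hence stably birational, to a smooth projective variety $X$, and then $\Psi([X])=\langle X\rangle$ realizes the corresponding basis element of $\mathbb Z[\mathfrak{sb}]$; as these basis elements generate the target, $\Psi$ is onto. Uniqueness is immediate, since $\K$ is generated as a ring (indeed as an abelian group) by classes of smooth projective varieties, on which the value of any homomorphism with the asserted property is forced. Finally, for a smooth irreducible proper but possibly non-projective variety $X$, one checks $\Psi([X])=\langle X\rangle$ by resolving a birational projective model and invoking the blow-up relation once more, so the resulting homomorphism has exactly the property stated.

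The step I expect to be the main obstacle is the appeal to Bittner's presentation — equivalently, the statement that the blow-up relations generate all relations in $\K$. This is the deep input, resting on resolution of singularities and weak factorization, and it is the reason the theorem is confined to characteristic zero. Everything downstream — the two birational-invariance observations for $\operatorname{Bl}_Y X$ and $E$, multiplicativity, surjectivity, and uniqueness — is essentially formal once that presentation is in hand.
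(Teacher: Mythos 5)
The paper does not actually prove this statement: it is quoted from Larsen--Lunts \cite{LL}, and the only indication of proof given here is the remark immediately afterwards that Bittner's presentation (Theorem \ref{Bittner}) subsumes it, with details worked out by Sahasrabudhe \cite{Sah}. Your proposal is essentially that derivation, and it is sound; but note that it differs from Larsen--Lunts's original argument, which constructs $\Psi$ directly on smooth complete varieties and verifies well-definedness from resolution of singularities and weak factorization, without routing through a presentation of $\operatorname{K}_0(\operatorname{Var}_k)$. What the Bittner route buys is exactly what you say: once the presentation is granted, the verification is formal (the blow-up is birational to $X$, and the exceptional divisor is a projective bundle over the centre, hence stably birational to it), and your treatment of multiplicativity --- needed separately, since Bittner's theorem presents only the underlying group --- is correctly reduced to the fact that over an algebraically closed field the components of $X\times X'$ are the irreducible products $X_i\times X'_j$. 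Two points to tighten. First, the version of (bl) stated in this paper uses smooth \emph{complete} varieties; with that version the generators already include all smooth irreducible proper varieties and your final paragraph is unnecessary, whereas if you insist on the projective version, the claim that ``invoking the blow-up relation once more'' yields $\Psi([X])=\langle X\rangle$ for a smooth proper non-projective $X$ is too quick: one needs a weak-factorization chain between $X$ and a smooth projective model and an induction on dimension, using the identity $[\mathrm{Bl}_Z W]=[W]-[Z]+[E]$ in the Grothendieck ring together with the inductive hypothesis to see that each step leaves $\Psi$ unchanged. Second, you should handle the degenerate case $Y=X$ of the blow-up relation separately (both sides vanish), since there the blow-up is empty and ``birational to $X$'' fails; this is trivial but worth a sentence.
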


Bittner \cite{Bit} obtained the following presentations of the Grothendieck group. Larsen and Lunts mention that Bittner's presentation subsumes the theorem above \cite[Remark~2.4]{LL} and this assertion has been proved in detail by Sahasrabudhe in \cite{Sah}.
\begin{theorem}[\protect{\cite[Theorem~3.1]{Bit}}]\label{Bittner}
Let $k$ be a field of characteristic $0$. The Grothendieck group $\G$ has the following presentations:
\begin{itemize}
\item[(sm)] as the abelian group generated by the isomorphism classes of smooth varieties over $k$ subject to the relations $[X]=[Y]+[X\setminus Y]$, where $X$ is smooth and $Y\subseteq X$ is a smooth closed subvariety;
\item[(bl)] as the abelian group generated by the isomorphism classes of smooth complete $k$-varieties subject to the relations $[\emptyset]=0$ and $[\mathrm{Bl}_Y\,X]-[E]=[X]-[Y]$, where $X$ is smooth and complete, $Y\subseteq X$ is a smooth closed subvariety, $\mathrm{Bl}_Y\,X$ is the blow-up of $X$ along $Y$ and $E$ is the exceptional divisor of this blow-up.
\end{itemize}
\end{theorem}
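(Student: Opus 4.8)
The plan is to follow Bittner's argument, whose two non-elementary inputs are Hironaka's resolution of singularities and the Weak Factorization Theorem of Abramovich--Karu--Matsuki--W{\l}odarczyk; both hold over a field of characteristic $0$, which is why that hypothesis appears.

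\emph{The (sm) presentation.} Let $G_{\mathrm{sm}}$ be the abelian group presented by the smooth generators and relations of (sm), and let $\varphi\colon G_{\mathrm{sm}}\to\G$ be the homomorphism sending a smooth generator to its class in $\G$; it is well defined since every (sm)-relation holds in $\G$. Surjectivity of $\varphi$ is a Noetherian induction on dimension: the regular locus $X_{\mathrm{reg}}$ of any $k$-variety $X$ is dense and open (as $X$ is reduced), so $[X]=[X_{\mathrm{reg}}]+[X\setminus X_{\mathrm{reg}}]$ in $\G$ with the second summand of strictly smaller dimension, whence by induction $[X]$ lies in the image of $\varphi$. The real work is to construct an inverse, that is, a function $e$ defined on all $k$-varieties, valued in $G_{\mathrm{sm}}$, additive under decomposition into a closed subvariety and its open complement, and equal to the generator $[X]$ when $X$ is smooth. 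One sets $e(X):=e(\widetilde X)-e(E)+e(Z)$, where $\widetilde X\to X$ is a resolution of singularities that is an isomorphism over $X\setminus Z$ (with $Z$ the non-regular locus) and $E$ is its exceptional set; the right-hand side makes sense by induction on dimension. That $e(X)$ is independent of the chosen resolution follows because any two resolutions are dominated by a third obtained from each by further blow-ups along smooth centres, and along each such blow-up the (sm)-relations show the value does not change; additivity for closed immersions is then a similar check. Thus $e$ descends to an inverse of $\varphi$.

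\emph{The (bl) presentation.} Let $G_{\mathrm{bl}}$ be the group on smooth complete generators subject to $[\emptyset]=0$ and the blow-up relations. There is an obvious homomorphism $G_{\mathrm{bl}}\to G_{\mathrm{sm}}$ sending a generator to itself; it is well defined because, for $X$ smooth complete and $Y\subseteq X$ a smooth closed subvariety, $\mathrm{Bl}_Y\,X$ is smooth and complete, $E\subseteq\mathrm{Bl}_Y\,X$ is a smooth closed subvariety with complement isomorphic to $X\setminus Y$, so in $G_{\mathrm{sm}}$ one already has $[\mathrm{Bl}_Y\,X]=[E]+[X\setminus Y]=[E]+[X]-[Y]$. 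For the reverse direction one builds a homomorphism $e'\colon G_{\mathrm{sm}}\to G_{\mathrm{bl}}$: for each smooth variety $U$ choose a smooth compactification $\overline U$ (Nagata followed by resolution) such that $\partial U:=\overline U\setminus U$ is a simple normal crossings divisor, and set $e'(U):=[\overline U]-e'(\partial U)$, where $e'$ on the strata of the normal crossings boundary is defined by descending induction on dimension. Well-definedness is the crux: two such compactifications of $U$ are both dominated by a third, and any proper birational morphism between smooth complete varieties factors, by the Weak Factorization Theorem, into a chain of blow-ups and blow-downs along smooth centres --- precisely the moves made invisible by the defining relations of $G_{\mathrm{bl}}$. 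One tracks how the boundary divisor transforms under each elementary step and checks the bookkeeping is consistent; combined with the (sm) part, this gives $G_{\mathrm{bl}}\cong\G$.

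The main obstacle in both halves is exactly this independence of auxiliary choices for the candidate inverses $e$ and $e'$ --- of the resolution, of the compactification, of the order of the blow-ups. This is where resolution of singularities is used to funnel any two choices into one common dominating choice, and where the Weak Factorization Theorem is used to cut the comparison morphism into elementary steps in bijection with the defining relations. The remaining ingredients --- surjectivity, verifying that each listed relation holds in $\G$, and the dimension inductions --- are routine.
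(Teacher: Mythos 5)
This statement is quoted by the paper from Bittner (\cite[Theorem~3.1]{Bit}) and is nowhere proved in the paper, so there is no internal argument to compare you with; what you have written is an outline of Bittner's own proof (resolution of singularities for (sm); Nagata compactification, resolution and the Weak Factorization Theorem for (bl)), which is the standard route.

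As a proof, though, your sketch has gaps exactly where the theorem is hard. For (sm), your well-definedness argument rests on the claim that any two resolutions of $X$ are dominated by a third obtained \emph{from each of them} by further blow-ups along smooth centres; this is a strong-factorization--type statement that is not known. What one can actually do is either follow Bittner's argument, which defines the inverse on an arbitrary variety via a filtration with smooth differences and proves, by noetherian induction from the (sm) relations, that the resulting class is independent of the filtration, or else repair your construction $e(X)=e(\widetilde X)-e(E)+e(Z)$ by first proving \emph{inside} the (sm)-presented group the blow-up identity $[\mathrm{Bl}_C W]-[E_C]=[W]-[C]$ for smooth $W$ and smooth centre $C$, and then comparing two resolutions through the weak factorization of the induced birational map relative to the locus where it is an isomorphism. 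For (bl), the sentence ``one tracks how the boundary divisor transforms under each elementary step and checks the bookkeeping is consistent'' is precisely the content of Bittner's proof: one must work with pairs $(\overline U,\partial U)$ with simple normal crossings boundary, show that the value $[\overline U]-e'(\partial U)$ is unchanged under an admissible blow-up whose smooth centre has normal crossings with $\partial U$ (an induction over the boundary strata), and only then invoke the version of weak factorization producing a zigzag of such admissible blow-ups that leaves $U$ untouched. Until these verifications are carried out, the candidate inverses $e$ and $e'$ are not known to be well defined, and that well-definedness \emph{is} the theorem; so your text should be read as a correct roadmap of Bittner's argument rather than a proof.
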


In Theorem \ref{GGISFREE}, we show that if Question \ref{LLQ} admits a positive answer over an algebraically closed field $k$ then the Grothendieck group $\G$ is a free abelian group. Further if $k$ has characteristic $0$, then this result subsumes Bittner's presentation in view of Hironaka's theorem on resolution of singularities.

\textbf{Conventions:} In the sequel $k$ denotes an algebraically closed field unless otherwise mentioned. If $X$ is a variety, we use $\dim X$ to denote its dimension and $d(X)$ to denote the number of its irreducible components of maximal dimension.

\section{The semiring of piecewise isomorphism classes of varieties}\label{Grsemiring}
The \textbf{Grothendieck group} $\mathrm{K}_0(M)$ associated with a commutative monoid $(M,+,0)$ is an abelian group equipped with a monoid homomorphism $q:M\to\mathrm{K}_0(M)$ that satisfies the following universal property: given an abelian group $N$ and a monoid homomorphism $f:M\to N$, there is a unique group homomorphism $\overline{f}:\mathrm{K}_0(M)\to N$ such that $f=\overline{f}\circ q$.

A commutative monoid $M$ is said to be \textbf{cancellative} if $a+c=b+c\Rightarrow a=b$ for all $a,b,c\in M$. It is easy to see that $M$ is cancellative if and only if the map $q:M\to\mathrm{K}_0(M)$ is injective.

If $(M,+,\cdotp,0,1)$ is a commutative semiring, then the abelian group $\mathrm{K}_0(M)$ can be equipped with a natural commutative ring structure. In this case, we say that $\mathrm{K}_0(M)$ is the \textbf{Grothendieck ring} of the semiring $M$.

Let $\{A\}$ denote the piecewise isomorphism class of a variety $A$. The set $\mathcal S_k$ of piecewise isomorphism classes of $k$-varieties carries a natural semiring structure.
\begin{eqnarray*}
0&:=&\{\emptyset\},\\
\{A\}+\{B\}&:=&\{A'\sqcup B'\}\mbox{ where }A'\in\{A\},\,B'\in\{B\},\,A'\cap B'=\emptyset.
\end{eqnarray*}
The product of the classes of varieties is defined by $\{A\}\cdotp\{B\}:=\{(A\times_{\mathrm{Spec}\,k}B)_{red}\}$, where $\{\mathrm{Spec}\,k\}$ is the multiplicative identity.

A general element of $\Q$ can be written as $\{A\}-\{B\}$ for some varieties $A,B$. Furthermore, $\{A_1\}-\{B_1\}=\{A_2\}-\{B_2\}$ if and only if there is some variety $C$ such that $A'_1\sqcup B'_2\sqcup C\doteqdot A'_2\sqcup B'_1\sqcup C$ in $\mathcal S_k$ for some $A'_j\in\{A_j\},B'_j\in\{B_j\}$ for $j=1,2$ such that $A'_1,B'_2,C$ and $A'_2,B'_1,C$ are families of pairwise disjoint varieties.

On the other hand, a general element of $\K$ can be expressed as a finite linear combination $\sum_ia_i[A_i]-\sum_jb_j[B_j]$ with $a_i,b_j\in\mathbb Z^+$ and $A_i,B_j\in\V$. We can choose some $A'_{i1},A'_{i2},\hdots,A'_{ia_i}\in[A_i]$ and $B'_{j1},B'_{j2},\hdots,B'_{jb_j}\in[B_j]$ for each $i,j$ such that every two distinct $A'_{ik}$ and $B'_{jl}$ are disjoint. Let $A:=\bigsqcup_{i,k}A'_{ik}$ and $B:=\bigsqcup_{j,l}B'_{jl}$. Then the identities $[A]=\sum_ia_i[A_i]$ and $[B]=\sum_jb_j[B_j]$ are clearly true in $\K$. Therefore a general element of $\K$ can be expressed as $[A]-[B]$ for some varieties $A,B$.
\begin{pro}\label{isoGR}
Let $k$ be an algebraically closed field. Then the natural map $\psi:\K\rightarrow\Q$ defined by $\psi([A]-[B]):=\{A\}-\{B\}$ is an isomorphism of rings.
\end{pro}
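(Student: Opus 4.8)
The plan is to construct a ring homomorphism $\K \to \Q$ directly from the presentation of $\K$, and then exhibit an inverse.

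First I would use the universal property of $\K$. The Grothendieck ring $\K$ is the quotient of the free abelian group on isomorphism classes of $k$-varieties by the scissor relations \eqref{genrelpre}, with multiplication induced by the product of varieties. To define $\psi$, it suffices to send $[A] \mapsto \{A\}$ and check that this is well-defined and multiplicative. Well-definedness means checking that if $Y \subseteq X$ is a closed subvariety, then $\{X\} = \{Y\} + \{X \setminus Y\}$ in $\Q$; but this is immediate from the semiring structure on $\mathcal S_k$, since $X = Y \sqcup (X\setminus Y)$ is a partition into locally closed subvarieties, so $\{X\} = \{Y \sqcup (X\setminus Y)\} = \{Y\} + \{X\setminus Y\}$. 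Multiplicativity is clear because $\psi$ is defined on generators by the identity on isomorphism-class representatives and both rings use the (reduced) product over $\operatorname{Spec} k$; here the hypothesis that $k$ is algebraically closed lets us drop the ``$\mathrm{red}$'' and work with the honest product, but in any case the product of varieties descends compatibly to both sides. This gives a well-defined ring homomorphism $\psi \colon \K \to \Q$, and the formula on a general element $[A]-[B]$ is exactly $\{A\} - \{B\}$ as stated.

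Next I would build the inverse $\phi \colon \Q \to \K$. Since $\Q = \mathrm{K}_0(\mathcal S_k)$ is the Grothendieck ring of the semiring $\mathcal S_k$, by its universal property it is enough to give a semiring homomorphism $\mathcal S_k \to \K$, i.e.\ send $\{A\} \mapsto [A]$. This is well-defined: if $A \doteqdot B$, then using partitions $A = \bigsqcup_i A_i$, $B = \bigsqcup_j B_j$ with $A_i \cong B_{\sigma(i)}$, repeated application of the scissor relation \eqref{genrelpre} gives $[A] = \sum_i [A_i] = \sum_i [B_{\sigma(i)}] = \sum_j [B_j] = [B]$ in $\K$; note that a partition into locally closed pieces can be refined into a filtration by closed subvarieties, so \eqref{genrelpre} does apply. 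Additivity ($\{A\}+\{B\} = \{A \sqcup B\} \mapsto [A \sqcup B] = [A]+[B]$) and multiplicativity are then clear, and the identity $\{\operatorname{Spec} k\}$ maps to $[\operatorname{Spec} k] = 1$. Hence we obtain a ring homomorphism $\phi \colon \Q \to \K$ with $\phi(\{A\} - \{B\}) = [A] - [B]$.

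Finally I would observe that $\psi$ and $\phi$ are mutually inverse: on a generating element $[A] \in \K$ we have $\phi(\psi([A])) = \phi(\{A\}) = [A]$, and on $\{A\} \in \mathcal S_k \subseteq \Q$ we have $\psi(\phi(\{A\})) = \psi([A]) = \{A\}$; since these generating elements generate the respective rings as abelian groups, both composites are the identity. Therefore $\psi$ is an isomorphism of rings. I do not anticipate a serious obstacle here: the whole proof is an exercise in chasing the two universal properties. The only point requiring a little care is the well-definedness of $\phi$ on piecewise-isomorphism classes — specifically, justifying that a locally closed partition refines to something to which the closed-subvariety scissor relation applies — and making sure the monoid/semiring homomorphism axioms (including the behavior of the empty variety and the unit) are all verified before invoking the universal property of $\mathrm{K}_0(-)$.
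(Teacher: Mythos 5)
Your proof is correct, but it is organized differently from the paper's. The paper proves the statement in one pass by first asserting, as a consequence of the scissor relations, the explicit characterization that for disjoint $Z_1,Z_2$ one has $[Z_1]=[Z_2]$ in $\K$ if and only if $Z_1\sqcup W\doteqdot Z_2\sqcup W$ for some auxiliary $W$; it then runs a chain of equivalences showing $[A_1]-[B_1]=[A_2]-[B_2]$ in $\K$ iff $\{A_1\}-\{B_1\}=\{A_2\}-\{B_2\}$ in $\Q$, which gives well-definedness and injectivity simultaneously, with surjectivity and multiplicativity checked at the end. You instead build the two maps separately from the two universal properties (the presentation of $\K$ by generators and scissor relations, and the Grothendieck-ring construction applied to the semiring $\mathcal S_k$) and verify they are mutually inverse on generators. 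The trade-off: the paper's route delivers as a byproduct the explicit ``stabilized piecewise isomorphism'' criterion for equality in $\K$, which is quoted repeatedly in later sections, but it leaves the nontrivial ``only if'' direction of that criterion as an unproved ``consequence of (1)''; your route avoids ever needing that direction as an input (it falls out afterwards from the isomorphism together with the definition of $\mathrm{K}_0$ of a monoid), and the only genuinely substantive checks are the two well-definedness verifications, which you handle correctly --- including the point, glossed over in the paper, that a locally closed partition must be reduced to iterated applications of the closed-subvariety relation.
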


\begin{proof}
Recall that piecewise isomorphic varieties have the same class in $\K$. Note as a consequence of \eqref{genrelpre} that if $Z_1$ and $Z_2$ are two disjoint varieties, then $[Z_1]=[Z_2]$ if and only if there is some $W$ disjoint from both $Z_1$ and $Z_2$ such that $Z_1\sqcup W\doteqdot Z_2\sqcup W$.

Let $A_1,B_1,A_2$ and $B_2$ be pairwise disjoint varieties.

Now $[A_1]-[B_1]=[A_2]-[B_2]$ in $\K$

$\iff[A_1]+[B_2]=[A_2]+[B_1]$ in $\K$

$\iff$ there is a variety $C$ disjoint from all $A_i$ and $B_j$

$\ \ \ \ \ \ \ \ $such that $A_1\sqcup B_2\sqcup C\doteqdot A_2\sqcup B_1\sqcup C$

$\iff\{A_1\}+\{B_2\}=\{A_2\}+\{B_1\}$ in $\mathcal S_k$

$\iff\{A_1\}-\{B_1\}=\{A_2\}-\{B_2\}$ in $\Q$.

Thus the map $\psi$ is both well-defined and injective. It is clearly surjective and preserves addition.

Finally we note that $\psi$ also preserves multiplication. Observe that for any two varieties $X$ and $Y$, we have $[X]\cdotp[Y]=[X\times_{\mathrm{Spec\,k}}Y]$ in $\K$ and $\{X\}\cdotp\{Y\}=\{X\times_{\mathrm{Spec\,k}}Y\}$ in $\Q$. Hence $\psi$ preserves multiplication of varieties. Using the distributivity of multiplication over addition completes the proof.
\end{proof}

The following question is natural.
\begin{que}\label{semiringcancel}
Let $k$ be an algebraically closed field. Is the semiring $\mathcal S_k$ of piecewise isomorphism classes of $k$-varieties cancellative?
\end{que}
A positive answer to this question is equivalent to injectivity of the natural map $q:\mathcal S_k\rightarrow \Q$. In view of Proposition \ref{isoGR}, it is also equivalent to injectivity of the map $\psi^{-1}\circ q:\mathcal S_k\rightarrow\K$. Hence a positive answer to Question \ref{semiringcancel} is equivalent to a positive answer to Question \ref{LLQ}.

\section{Equivalence of Question \ref{LLQ} and Conjecture \ref{GG}}\label{maintheorem}
We note a consequence of equality in the Grothendieck ring from \cite{LS}. Scanlon has pointed out another proof using counting function methods from \cite{Kra}.
\begin{pro}[\protect{\cite[Cor.\,5]{LS}}]\label{dimdeg}
Let $k$ be an algebraically closed field of characteristic $0$. Let $A$ and $B$ be two varieties with $[A]=[B]$ in $\K$. Then $\dim A=\dim B$ and $d(A)=d(B)$.
\end{pro}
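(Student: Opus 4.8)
The plan is to exhibit a motivic measure whose target is graded (or filtered) in such a way that the top piece of the image of $[X]$ records exactly the pair $(\dim X, d(X))$, and to reduce the statement over an arbitrary algebraically closed field of characteristic $0$ to the classical case $k=\mathbb{C}$.

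For the reduction I would invoke the Lefschetz principle. The varieties $A,B$ together with the finitely many elementary relations of the form \eqref{genrelpre} certifying $[A]=[B]$ in $\K$ involve only finitely many elements of $k$, so all of this data is already defined over a subfield $k_0\subseteq k$ finitely generated over $\mathbb{Q}$; passing to the algebraic closure $\overline{k_0}$ of $k_0$ inside $k$ yields models $A_0,B_0$ of $A,B$ over the algebraically closed field $\overline{k_0}$ with $[A_0]=[B_0]$ in $\operatorname{K}_0(\operatorname{Var}_{\overline{k_0}})$. Since $\overline{k_0}$ has finite transcendence degree over $\mathbb{Q}$ it embeds into $\mathbb{C}$; base change along such an embedding is a ring homomorphism $\operatorname{K}_0(\operatorname{Var}_{\overline{k_0}})\to\operatorname{K}_0(\operatorname{Var}_{\mathbb{C}})$ (the models stay reduced because $\overline{k_0}$ is perfect), and because $\overline{k_0}$ is algebraically closed the irreducible components of $A_0$ (resp.\ $B_0$) are geometrically irreducible and hence survive base change to both $k$ and $\mathbb{C}$. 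Thus $\dim A=\dim A_0=\dim A_{\mathbb{C}}$ and $d(A)=d(A_0)=d(A_{\mathbb{C}})$, and likewise for $B$, with $[A_{\mathbb{C}}]=[B_{\mathbb{C}}]$, so it suffices to prove the proposition for $k=\mathbb{C}$.

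Over $\mathbb{C}$ I would use the Hodge--Deligne polynomial $E(X;u,v)=\sum_{p,q}\bigl(\sum_i(-1)^i h^{p,q}(H^i_c(X,\mathbb{C}))\bigr)u^pv^q$, which by the long exact sequence in compactly supported cohomology and the K\"unneth formula defines a ring homomorphism $E\colon\operatorname{K}_0(\operatorname{Var}_{\mathbb{C}})\to\mathbb{Z}[u,v]$. The heart of the argument is the computation that, for a nonempty variety $X$ of dimension $n$,
\[
E(X;u,v)=d(X)\,(uv)^{n}+(\text{terms of total degree strictly less than }2n).
\]
Indeed $H^i_c(X,\mathbb{C})$ vanishes for $i>2n$ and carries weights $\le i$, so no monomial $u^pv^q$ with $p+q\ge 2n$ other than $(uv)^n$ can occur, while the coefficient of $(uv)^n$ is $h^{n,n}(H^{2n}_c(X,\mathbb{C}))$; one checks, by stratifying $X$ into smooth locally closed pieces and using additivity of $E$ (the smooth irreducible case being Poincaré duality applied to $H^0$), that $H^{2n}_c(X,\mathbb{C})\cong\mathbb{Q}(-n)^{\oplus d(X)}$ as a mixed Hodge structure. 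Granting this, $E([A])=E([B])$ forces equality of top-degree parts, hence $\dim A=\dim B$ and $d(A)=d(B)$; the empty case is trivial since $E(X)=0$ forces $X=\emptyset$.

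A variant, closer to Scanlon's counting-function argument via \cite{Kra}, instead spreads $A,B$ and the relations \eqref{genrelpre} out to a scheme of finite type over $\mathbb{Z}$, specializes to closed points $v$ to obtain $\#A_v(\mathbb{F}_{q^r})=\#B_v(\mathbb{F}_{q^r})$ for all $r$, and reads off $\dim$ and $d$ from the Lang--Weil asymptotics of these point counts. In either route the only real obstacle is the leading-term analysis — pinning down precisely which geometric invariant of $X$ is detected at the top of $E(X)$ (respectively in the growth rate of $\#X(\mathbb{F}_{q^r})$) — together with the accompanying bookkeeping ensuring that $\dim$ and the number of top-dimensional geometric components are unaffected by the base changes used in the reduction.
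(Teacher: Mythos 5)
Your proposal is correct, but note that the paper itself offers no proof of this Proposition: it is quoted verbatim from Liu--Sebag \cite[Cor.~5]{LS}, with a remark that Scanlon indicated an alternative proof via the counting functions of \cite{Kra}. Your main route --- spread out to a finitely generated field, embed its algebraic closure in $\mathbb C$ (checking that $\dim$ and $d$ are insensitive to base change between algebraically closed fields and that the finitely many relations witnessing $[A]=[B]$ descend), then apply the Hodge--Deligne $E$-polynomial --- is essentially the argument in \cite{LS}, and your leading-term analysis is sound: weights of $H^i_c$ are at most $i$ and Hodge numbers $h^{p,q}$ vanish unless $p,q\le n$, so only $i=2n$ and $(p,q)=(n,n)$ can contribute in total degree $2n$, and additivity of $E$ plus Poincar\'e duality on a smooth dense open subset of the top-dimensional locus gives the coefficient $d(X)$ (you do not even need the full identification of $H^{2n}_c(X)$ as a mixed Hodge structure, only this coefficient). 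Your sketched variant via point counts over finite fields and Lang--Weil is precisely the proof the paper attributes to Scanlon; there the extra bookkeeping you mention (constructibility of $\dim$ and of the number of top-dimensional geometric components in the fibres of the spreading-out, so that a dense open set of closed points is usable) is the only delicate point. In short: a complete and correct proof, matching the cited sources rather than anything internal to this paper.
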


Given two varieties $V$ and $W$ such that $[V]=[W]$ in $\K$, Proposition \ref{isoGR} states that there is some variety $Z$ disjoint from $V$ and $W$ such that $V\sqcup Z\doteqdot W\sqcup Z$. Under the hypothesis of Conjecture \ref{GG}, we develop a technique in Proposition \ref{matching} to remove a dense subset of $Z$ from both $V\sqcup Z$ and $W\sqcup Z$ to leave piecewise isomorphic complements. In fact the following Proposition is a reformulation of Conjecture \ref{GG}.

\begin{pro}\label{matching}
Suppose Conjecture \ref{GG} holds for an algebraically closed field $k$.

Let $V,W$ and $Z$ be $k$-varieties such that $Z$ is disjoint from both $V$ and $W$, $\dim V\leq\dim W\leq t=\dim Z$ and $d(Z)=e$. Assume that $d(V)=d(W)$ if $\dim V=\dim W$. Further let $d=\begin{cases}d(V)&\mbox{ if }\dim V=\dim W=t,\\0&\mbox{ otherwise}.\end{cases}$

Let $S_1,S_2,\hdots,S_{d+e}$ and $T_1,T_2,\hdots,T_{d+e}$ be families of pairwise disjoint irreducible subvarieties of $V\sqcup Z$ and $W\sqcup Z$ respectively such that $\dim S_l=\dim T_l=t$ for each $l\in[d+e]$. Assume that the varieties $S_l$ and $T_l$ are either disjoint from or contained in $Z$ for each $l\in[d+e]$. Furthermore assume that $\tau$ is a permutation of $[d+e]$ such that $f_l:S_l\cong T_{\tau(l)}$ is a variety isomorphism for each $l\in[d+e]$.

Then there are subsets $P,Q\subseteq[d+e]$ of size $e$, a bijection $\lambda:Q\to P$ and dense subvarieties $S'_l\subseteq S_l,\ T'_l\subseteq T_l$ for $l\in[d+e]$ such that the following hold:
\begin{itemize}
\item $S'_{\lambda(l)}=T'_l\subseteq Z$ for each $l\in Q$;
\item $\bigsqcup_{m\notin P}S'_m\doteqdot\bigsqcup_{l\notin Q}T'_l$;
\item $\bigsqcup_{l\in[d+e]}(S_l\setminus S'_l)\doteqdot\bigsqcup_{l\in[d+e]}(T_l\setminus T'_l)$.
\end{itemize}
\end{pro}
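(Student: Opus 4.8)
The plan is to observe first that $P$, $Q$ and $\lambda$ are essentially forced, then to organise the isomorphisms $f_l$ into a bipartite "transport graph" whose paths can be handled by bare hands and whose cycles are dealt with, one at a time, by Conjecture \ref{GG}. I record the relevant consequence of that conjecture for later use: if $\phi\colon X\dashrightarrow X$ is a birational self-map of an irreducible variety $X$, then there is a dense open $U\subseteq X$ with $\phi|_U\colon U\cong\phi(U)$, $\phi(U)$ dense open, and $X\setminus U\doteqdot X\setminus\phi(U)$; moreover this remains true with $U$ replaced by any smaller dense open. (Take for $U$ the top-dimensional piece, intersected with the domain of $\phi$, of a piecewise automorphism extending $\phi$; the complements of the two top pieces are piecewise isomorphic, and shrinking $U$ to $U'$ only transfers an isomorphic pair $U\setminus U'\cong\phi(U\setminus U')$ into those complements.)

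\emph{Setting up the combinatorics.} Since $\dim S_l=\dim T_l=t$ and the $S_l$, and separately the $T_l$, are pairwise disjoint and irreducible, each $S_l$ that meets $Z$ is a dense open of a distinct $t$-dimensional component of $Z$, and similarly for $V$ and for the $T_l$; a count of top-dimensional components (using $d(V)=d(W)$ when $\dim V=\dim W$) shows that exactly $e$ of the $S_l$ lie in $Z$, exactly $d$ lie in $V$, and the same on the $T$-side. Write $I_S^Z,I_S^V,I_T^Z,I_T^W$ for the corresponding index sets. Since any dense subvariety $T'_l\subseteq Z$ of $T_l$ is nonempty, $l\in Q$ forces $T_l\subseteq Z$, so $Q\subseteq I_T^Z$ and hence $Q=I_T^Z$ by cardinality; likewise $P=I_S^Z$. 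The requirement $S'_{\lambda(l)}=T'_l\subseteq Z$ forces $S_{\lambda(l)}$ and $T_l$ to be dense in one and the same component of $Z$, which determines $\lambda\colon I_T^Z\to I_S^Z$ uniquely. Now build the bipartite multigraph $\mathcal G$ on $\{S_l\}\sqcup\{T_l\}$ with a $\tau$-edge joining $S_l$ to $T_{\tau(l)}$ (labelled by $f_l$) for each $l$, and a $Z$-edge joining $S_{\lambda(m)}$ to $T_m$ for each $m\in I_T^Z$ (labelled by the identity of the common component of $Z$). Exactly the vertices $S_l$ with $l\in I_S^V$ and $T_l$ with $l\in I_T^W$ have degree $1$, all others degree $2$; moreover both edges at any degree-$2$ vertex cross between the two sides, so $\mathcal G$ is a disjoint union of cycles — all of whose vertices are pieces of $Z$ — and of $d$ alternating paths, each of which, one checks, joins some $S_l$ with $l\in I_S^V$ to some $T_m$ with $m\in I_T^W$ (after its first edge a path re-enters the $S$-side only along $Z$-edges, whose $S$-ends have degree $2$ and so cannot be endpoints).

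\emph{Paths, then cycles.} Fix a path $S_{b_0}-T_{\tau(b_0)}-S_{b_1}-\dots-S_{b_s}-T_{\tau(b_s)}$ with $b_0\in I_S^V$ and $\tau(b_s)\in I_T^W$. Composing $f_{b_0},\dots,f_{b_s}$ through the $Z$-edges gives a birational map $\Phi\colon S_{b_0}\dashrightarrow T_{\tau(b_s)}$ between the $V$-piece and the $W$-piece; let $\Omega\subseteq S_{b_0}$ be a dense open inside the domain of every partial composite, and set $S'_{b_0}:=\Omega$, $S'_{b_i}:=(f_{b_{i-1}}\circ\dots\circ f_{b_0})(\Omega)$ and $T'_{\tau(b_{i-1})}:=S'_{b_i}$ for $1\le i\le s$, and $T'_{\tau(b_s)}:=\Phi(\Omega)$. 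These are dense opens; $S'_{\lambda(\tau(b_i))}=S'_{b_{i+1}}=T'_{\tau(b_i)}$ gives the first bullet along the path, $\Phi$ gives $S'_{b_0}\cong T'_{\tau(b_s)}$ for the second bullet, and because $f_{b_i}(S'_{b_i})=T'_{\tau(b_i)}$ the isomorphism $f_{b_i}$ carries $S_{b_i}\setminus S'_{b_i}$ onto $T_{\tau(b_i)}\setminus T'_{\tau(b_i)}$, settling the third bullet along the path with no use of Conjecture \ref{GG}. For a cycle $S_{a_0}-T_{\tau(a_0)}-\dots-S_{a_{r-1}}-T_{\tau(a_{r-1})}-S_{a_0}$ the same composition now yields a birational \emph{self}-map $\phi_C$ of the component $Z_{j_0}$ containing $S_{a_0}$; restrict $\phi_C$ to the dense open $T_{\tau(a_{r-1})}\subseteq Z_{j_0}$, apply the recorded consequence of Conjecture \ref{GG}, and shrink the resulting dense open to a $D_0\subseteq S_{a_0}\cap T_{\tau(a_{r-1})}$ inside the domain of $\phi_C$, so that $T_{\tau(a_{r-1})}\setminus D_0\doteqdot T_{\tau(a_{r-1})}\setminus\phi_C(D_0)$. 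Put $D_i:=(f_{a_{i-1}}\circ\dots\circ f_{a_0})(D_0)$, $S'_{a_i}:=D_i$, $T'_{\tau(a_i)}:=D_{i+1}$ for $i<r-1$, and $T'_{\tau(a_{r-1})}:=D_0$. Then the first bullet holds along $C$; the isomorphisms $f_{a_i}$ match $S_{a_i}\setminus D_i$ with $T_{\tau(a_i)}\setminus D_{i+1}=T_{\tau(a_i)}\setminus T'_{\tau(a_i)}$ for $i<r-1$, whereas for $i=r-1$ one only gets $S_{a_{r-1}}\setminus D_{r-1}\cong T_{\tau(a_{r-1})}\setminus\phi_C(D_0)\doteqdot T_{\tau(a_{r-1})}\setminus T'_{\tau(a_{r-1})}$, the middle $\doteqdot$ being exactly the output of Conjecture \ref{GG}. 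Summing the path and cycle contributions delivers the second and third bullets.

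\emph{Main obstacle.} The routine part is the bookkeeping — arranging each dense open to sit inside the domain of the relevant composite isomorphism, and checking (as in the preliminary remark) that shrinking a dense open preserves the piecewise isomorphism of complements. The conceptual obstacle, and the sole place where Conjecture \ref{GG} is used, is the cycle case: going once around a cycle produces a genuine birational self-map $\phi_C$ whose "monodromy" $\phi_C(D_0)\neq D_0$ blocks a naïve matching of the leftovers, and it is precisely this monodromy that Conjecture \ref{GG} absorbs. One should also attend to the degenerate cases ($\tau$- and $Z$-edges coinciding, giving length-one cycles; a length-one path; and $t$ large relative to $\dim V,\dim W$, so that some index sets are empty), but these do not affect the argument.
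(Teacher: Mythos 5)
Your proof is correct and is essentially the paper's argument in a cleaner packaging: the paper reaches the same endpoint by iteratively replacing $\tau$ with $\tau_1,\tau_2,\dots$ and composing $f_j\circ f_{\tau^{-1}(i)}$ until $\tau_n^{-1}$ agrees with $\lambda$ on $Q$, which is exactly the step-by-step contraction of your paths and cycles, and it too invokes Conjecture \ref{GG} only for the resulting birational self-maps of components of $Z$ (your cycle monodromy). Your explicit path/cycle decomposition and the observation that paths need no appeal to the conjecture make the structure more transparent, but the underlying mechanism is identical.
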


\begin{proof}
We have $S_l\subseteq Z$ and $T_m\subseteq Z$ for exactly $e$ values of both $l$ and $m$. Let $Q,P\subseteq[d+e]$ be the sets of such $m$ and $l$ respectively. For each $l\in P$, there is a unique $m\in Q$ such that $\dim(S_l\cap T_m)=t$. Let $\lambda:Q\to P$ define this correspondence.

\textbf{Case I:} Suppose that $\lambda(l)=\tau^{-1}(l)$ for each $l\in Q$.

In this case we set $S'_{\lambda(l)}=T'_l:=S_{\lambda(l)}\cap T_l$ for each $l\in Q$. The isomorphism $f_{\lambda(l)}:S_{\lambda(l)}\to T_l$ can be seen as a birational self-map of $S_{\lambda(l)}\cup T_l$. Since Conjecture \ref{GG} holds, this birational map can be extended to obtain a piecewise automorphism of $S_{\lambda(l)}\cup T_l$. In particular, one gets a piecewise isomorphism $T_l\setminus S_{\lambda(l)}\doteqdot S_{\lambda(l)}\setminus T_l$ of lower dimensional subvarieties.

\textbf{Case II:} Suppose that $\lambda(i)\neq\tau^{-1}(i)$ for some $i\in Q$. Fix such $i$ and let $j:=\lambda(i)$.

The idea of the proof is to find subvarieties $S^1_l\subseteq S_l$ and $T^1_l\subseteq T_l$ for each $l\in[d+e]$ and a permutation $\tau_1$ of $[d+e]$ such that the following properties are satisfied:
\begin{itemize}
\item[(i)] $\{l\in Q:\lambda(l)=\tau_1^{-1}(l)\}\supsetneq\{l\in Q:\lambda(l)=\tau^{-1}(l)\}$;
\item[(ii)] $f_l:S_l\setminus S^1_l\cong T_{\tau(l)}\setminus T^1_{\tau(l)}$ is an isomorphism for each $l\in[d+e]$;
\end{itemize}
and then continue inductively.

Since $j\neq\tau^{-1}(i)$, we make the following assignments.

$S^1_l:=\begin{cases}T_i\cap S_j&\mbox{if }l=j,\\f_{\tau^{-1}(i)}^{-1}(T_i\cap S_j)&\mbox{if }l=\tau^{-1}(i)\\S_l&\mbox{otherwise}.\end{cases}$

$T^1_l:=\begin{cases}f_j(T_i\cap S_j)&\mbox{if }l=\tau(j),\\T_i\cap S_j&\mbox{if }l=i\\T_l&\mbox{otherwise}.\end{cases}$

The maps $f_{\tau^{-1}(i)}$ and $f_j$ clearly restrict to isomorphisms $S_{\tau^{-1}(i)}\setminus S^1_{\tau^{-1}(i)}\cong T_i\setminus T^1_i$ and $S_j\setminus S^1_j\cong T_{\tau(j)}\setminus T^1_{\tau(j)}$ of lower dimensional subvarieties. This takes care of property $(ii)$.

Now we define $\tau_1:[d+e]\to[d+e]$ as follows.

$\tau_1(l):=\begin{cases}i&\mbox{if }l=j,\\\tau(j)&\mbox{if }l=\tau^{-1}(i)\\\tau(l)&\mbox{otherwise}.\end{cases}$

Note that $\lambda(i)\neq\tau^{-1}(i)$, but $\lambda(i)=\tau_1^{-1}(i)$. This shows that $(i)$ holds.

Furthermore, $f^1_{\tau^{-1}(i)}:=f_j\circ f_{\tau^{-1}(i)}:S^1_{\tau^{-1}(i)}\to T^1_{\tau_1(\tau^{-1}(i))}$ and $f^1_j:=id:S^1_j\to T^1_{\tau_1(j)}$ are isomorphisms. For the remaining $l\in[d+e]$, we set $f^1_l:=f_l$.

Thus $f^1_l:S^1_l\to T^1_{\tau_1(l)}$ is an isomorphism for each $l\in[d+e]$. If $\lambda$ does not agree with $\tau_1^{-1}$ on $Q$, we iterate the process with varieties $S^1_l,\,T^1_l$, functions $f^1_l$ and permutation $\tau_1$ until some $(\tau_n)^{-1}$ agrees with $\lambda$ on $Q$.

We set $T'_l:=T^n_l$ for each $l\notin Q$ and $S'_l:=S^n_l$ for each $l\notin P$.

The varieties $S^n_{\lambda(l)},\,T^n_l$, for $l\in Q$, together with the function $\lambda=\tau_n^{-1}\upharpoonright_Q$ is the set-up for the first case. A construction similar to that case gives the required varieties $S'_{\lambda(l)},\,T'_l$.

In both cases, it is clear that the construction guarantees the final two conditions in the statement of the proposition.
\end{proof}

Now we derive the result of Proposition \ref{dimdeg} for an algebraically closed field of non-zero characteristic under the hypothesis of Conjecture \ref{GG}.

\begin{cor}\label{dimdeggen}
Suppose Conjecture \ref{GG} holds for an algebraically closed field $k$. Let $V$ and $W$ be two $k$-varieties with $[V]=[W]$ in $\K$. Then $\dim V=\dim W$ and $d(V)=d(W)$.
\end{cor}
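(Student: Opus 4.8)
The plan is to bootstrap from the characteristic-zero result (Proposition \ref{dimdeg}) by using the machinery already set up, rather than re-deriving the dimension/degree invariants from scratch. Since $[V]=[W]$ in $\K$, Proposition \ref{isoGR} gives a variety $Z$, disjoint from both $V$ and $W$, with $V\sqcup Z\doteqdot W\sqcup Z$. Fix such a $Z$ together with the partitions and the variety isomorphism $\sigma$ witnessing the piecewise isomorphism. First I would reduce to the top-dimensional pieces: let $t=\dim(V\sqcup Z)=\dim(W\sqcup Z)$ and collect those pieces of each partition that have dimension exactly $t$. These top-dimensional pieces, being locally closed in their ambient varieties, can be refined so that each one is irreducible and is either contained in or disjoint from $Z$; after this refinement we are exactly in the hypotheses of Proposition \ref{matching}, with $d+e$ the number of top-dimensional irreducible pieces and $\tau$ the induced permutation.

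Next I would apply Proposition \ref{matching} to peel off a dense subvariety of $Z$. The proposition produces dense subvarieties $S'_l\subseteq S_l$, $T'_l\subseteq T_l$ and index sets $P,Q$ of size $e$ with a bijection $\lambda:Q\to P$ such that $S'_{\lambda(l)}=T'_l\subseteq Z$ for $l\in Q$, together with $\bigsqcup_{m\notin P}S'_m\doteqdot\bigsqcup_{l\notin Q}T'_l$ and $\bigsqcup_l(S_l\setminus S'_l)\doteqdot\bigsqcup_l(T_l\setminus T'_l)$. The point is that removing $\bigsqcup_{l\in Q}T'_l$ (a dense open portion of the $t$-dimensional part of $Z$) from $W\sqcup Z$ and removing the \emph{same} subvarieties $\bigsqcup_{l\in P}S'_l$ from $V\sqcup Z$ leaves a piecewise isomorphism between the two complements: on the top-dimensional level the pieces indexed by $P$ (resp.\ $Q$) are gone, the pieces indexed outside $P$ and outside $Q$ match by the second bullet, the lower-dimensional remnants $S_l\setminus S'_l$ match by the third bullet, and all the original sub-top-dimensional pieces of the partition are unaffected. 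Writing $Z'=Z\setminus\bigsqcup_{l\in Q}T'_l$, what we obtain is $V\sqcup Z'\doteqdot W\sqcup Z'$ with $\dim Z'<t$ or at least $d(Z')<d(Z)$; iterating the argument on the (strictly smaller) $Z'$ we eventually reach $Z''=\emptyset$, i.e.\ $V\doteqdot W$ directly.

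The upshot of the previous paragraph is that under Conjecture \ref{GG} the conclusion of Proposition \ref{isoGR} can be upgraded from ``$V\sqcup Z\doteqdot W\sqcup Z$ for some $Z$'' to ``$V\doteqdot W$''. Once we have $V\doteqdot W$, the conclusion is immediate and characteristic-free: matching the top-dimensional irreducible pieces of the two partitions shows $\dim V=\dim W$ and that the number of top-dimensional irreducible components is preserved, so $d(V)=d(W)$. (Concretely, a locally closed subvariety of $V$ of dimension $\dim V$ has its closure containing a top-dimensional component of $V$, and an isomorphism of such pieces transports this data to $W$; a short counting argument over the pieces yields $d(V)=d(W)$.)

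The main obstacle is the bookkeeping in the iteration: one must check that the reduction $Z\rightsquigarrow Z'$ genuinely decreases the pair $(\dim Z, d(Z))$ in the lexicographic order (equivalently, that Proposition \ref{matching} has indeed removed a dense piece from \emph{every} top-dimensional component of $Z$, which is why $|Q|=e=d(Z)$ is exactly the right count), and that the sub-top-dimensional pieces of the partitions, which Proposition \ref{matching} leaves untouched, can be carried along consistently through each iteration without interfering with the hypotheses of the next application. Getting the induction invariant stated correctly — so that at each stage we still have a genuine piecewise isomorphism $V\sqcup Z_n\doteqdot W\sqcup Z_n$ with $Z_n$ disjoint from $V$ and $W$ — is the one point that needs care; everything else is routine.
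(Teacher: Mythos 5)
There is a genuine gap, and it comes from overreaching: you try to iterate Proposition \ref{matching} ``until $Z''=\emptyset$'' and so to conclude $V\doteqdot W$ outright, but the proposition only applies while $\dim V\leq\dim W\leq\dim Z$. Each application removes a dense piece from every top-dimensional component of $Z$, so the iteration does drive $\dim Z$ down, but it stalls as soon as $\dim Z$ falls below $\dim W$: at that point the top-dimensional pieces of $V\sqcup Z\doteqdot W\sqcup Z$ sit inside $V$ and $W$, not inside $Z$, and Proposition \ref{matching} (whose mechanism is to peel dense subvarieties off $Z$) gives you nothing. Concluding $V\doteqdot W$ from $V\sqcup Z'\doteqdot W\sqcup Z'$ with $\dim Z'<\dim W$ is exactly Theorem \ref{GRRMGM}, whose proof needs a separate descent on $V$ and $W$ themselves (the case $s>t$ there) that your sketch does not supply --- and which in the paper is run \emph{after} the present corollary is available. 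There is also a circularity problem at the very first application: when $\dim V=\dim W=\dim Z=t$, the hypotheses of Proposition \ref{matching} require $d(V)=d(W)$ (this is what makes both families have the same length $d+e$ with $d=d(V)$), which is precisely what you are trying to prove. That equality can in fact be extracted for free by counting the $t$-dimensional irreducible pieces on the two sides of the piecewise isomorphism, but you never make this observation --- and once you do make it, the proposition is not needed in that case at all.

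The repair is to aim lower, as the paper does: after normalizing $\dim V\leq\dim W$, use Proposition \ref{matching} only in the regime $\dim W<\dim Z$ (where $d=0$ and all top-dimensional pieces lie in $Z$) to replace $Z$ by a witness $Z'$ with $\dim Z'\leq\dim W$, and then stop. In the terminal situation $\dim Z'\leq\dim W$ the conclusion is obtained not from $V\doteqdot W$ but from the counting argument: the $\dim$-maximal irreducible components on the two sides of $V\sqcup Z'\doteqdot W\sqcup Z'$ are matched bijectively, which forces $\dim V=\dim W$ and $d(V)=d(W)$. Your final paragraph's counting is essentially this step, but you deploy it only after the (unjustified) claim $V\doteqdot W$; applied directly to $V\sqcup Z'\doteqdot W\sqcup Z'$ it closes the argument without ever needing $Z''=\emptyset$.
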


\begin{proof} 
Without loss we may assume that $V$ and $W$ are disjoint and have the same class in $\K$. In view of Proposition \ref{isoGR}, there is a variety $Z$ disjoint from both $V$ and $W$ such that $V\sqcup Z\doteqdot W\sqcup Z$.

\textbf{Case I:} $\dim Z,\dim V\leq\dim W$. 

Counting the number of irreducible components of maximal dimension on both sides of the piecewise isomorphism $V\sqcup Z\doteqdot W\sqcup Z$ forces the equalities $\dim V=\dim W$ and $d(V)=d(W)$.

\textbf{Case II:} $\dim V\leq\dim W<\dim Z$. 

Let $e:=d(Z)$ and $d=0$. The piecewise isomorphism $V\sqcup Z\doteqdot W\sqcup Z$ gives two families $S_1,S_2,\hdots,S_e$ and $T_1,T_2,\hdots,T_e$ of irreducible subvarieties of $Z$, each of dimension $e$, satisfying the hypotheses of Proposition \ref{matching}. 

An application of the Proposition then gives a subvariety $Z_1=\bigsqcup_{i\in[e]}T'_i\subseteq Z$ with $\dim Z_1=\dim Z, d(Z_1)=d(Z)$ such that $V\sqcup (Z\setminus Z_1)\doteqdot W\sqcup (Z\setminus Z_1)$. Proposition \ref{matching} can be used repeatedly until the equality $[V]=[W]$ is witnessed by a subvariety $Z'\subseteq Z$ satisfying $\dim Z'\leq\dim W$. Then we land up in the first case completing the proof. 
\end{proof}

\begin{theorem}\label{GRRMGM}
Let $k$ be an algebraically closed field. If Conjecture \ref{GG} holds for $k$, then Question \ref{LLQ} admits a positive answer over $k$.
\end{theorem}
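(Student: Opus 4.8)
The plan is to use Proposition~\ref{isoGR} to replace the equality $[X]=[Y]$ by a piecewise isomorphism $X\sqcup Z\doteqdot Y\sqcup Z$ with $Z$ disjoint from $X$ and $Y$, and then to ``cancel'' $Z$ by induction on the parameter $m:=\max(\dim X,\dim Y,\dim Z)$, treating separately the regime where $Z$ is at least as large as $X,Y$ (handled by Proposition~\ref{matching}, which shrinks $Z$) and the regime where it is strictly smaller (handled by peeling off the top-dimensional components of $X$ and $Y$). First I would reduce: replacing $X,Y$ by disjoint isomorphic copies (which affects neither hypothesis nor conclusion), Proposition~\ref{isoGR} supplies such a $Z$. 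Fix partitions into locally closed subvarieties and matching isomorphisms witnessing $X\sqcup Z\doteqdot Y\sqcup Z$; by Noetherian induction each piece subdivides into irreducible locally closed pieces, and transporting the subdivisions through the matching isomorphisms we may assume every piece is irreducible, hence (being connected, and $Z$ being clopen in $X\sqcup Z$ and in $Y\sqcup Z$) either contained in $Z$ or disjoint from it. Observe the elementary fact that, in a partition of a variety $U$ into irreducible locally closed pieces, there is exactly one piece of dimension $\dim U$ inside each maximal-dimensional component of $U$, and it is dense there. By Corollary~\ref{dimdeggen}, $\dim X=\dim Y=:n$ and $d(X)=d(Y)$. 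If $Z=\emptyset$ (in particular at the base of the induction) then $X=X\sqcup Z\doteqdot Y\sqcup Z=Y$; otherwise I split the inductive step according to whether $n\le\dim Z$ or $n>\dim Z$.

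Suppose $n\le\dim Z=:t$, so $m=t$. The pieces of dimension $t$ furnish exactly the data for Proposition~\ref{matching}: with $e:=d(Z)$ and $d:=d(X)$ if $n=t$, $d:=0$ otherwise, there are $d+e$ such pieces on each side, $e$ of them inside $Z$, and the matching isomorphisms (dimension-preserving) restrict to a bijection giving families $S_1,\dots,S_{d+e}$, $T_1,\dots,T_{d+e}$, a permutation $\tau$ and isomorphisms $f_l\colon S_l\cong T_{\tau(l)}$; the hypotheses hold since $\dim X=\dim Y=n\le t$ and $d(X)=d(Y)$. Applying Proposition~\ref{matching} produces $P,Q$, $\lambda\colon Q\to P$, and dense $S'_l\subseteq S_l$, $T'_l\subseteq T_l$ with the three listed properties. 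Put $R:=\bigsqcup_{l\notin P}S'_l\subseteq X$, $R':=\bigsqcup_{l\notin Q}T'_l\subseteq Y$, $Z_1:=\bigsqcup_{l\in P}S'_l=\bigsqcup_{l\in Q}T'_l\subseteq Z$ (equality by the first property and bijectivity of $\lambda$), and $X^*:=X\setminus R$, $Y^*:=Y\setminus R'$, $Z^*:=Z\setminus Z_1$. Deleting $\bigsqcup_l S'_l$ from the left of $X\sqcup Z\doteqdot Y\sqcup Z$ and $\bigsqcup_l T'_l$ from the right, the remaining pieces are $\{S_l\setminus S'_l\}$ together with the pieces of dimension $<t$ (partitioning $X^*\sqcup Z^*$), respectively $\{T_l\setminus T'_l\}$ together with those of dimension $<t$ (partitioning $Y^*\sqcup Z^*$); since the matching isomorphisms carry the $<t$-dimensional pieces among themselves and the third property gives $\bigsqcup_l(S_l\setminus S'_l)\doteqdot\bigsqcup_l(T_l\setminus T'_l)$, we obtain $X^*\sqcup Z^*\doteqdot Y^*\sqcup Z^*$. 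As each $S'_l,T'_l$ is dense in a top-dimensional piece and $Z_1$ is dense in the union of the maximal-dimensional components of $Z$, all of $\dim X^*,\dim Y^*,\dim Z^*$ are $<t=m$, so the induction hypothesis gives $X^*\doteqdot Y^*$; with the second property $R\doteqdot R'$ this yields $X=X^*\sqcup R\doteqdot Y^*\sqcup R'=Y$.

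Suppose instead $n>\dim Z$, so $m=n$. Then every piece of dimension $n$ lies in $X$ on the left and in $Y$ on the right, and the matching isomorphisms pair them; writing $\bigsqcup_l S_l\subseteq X$ and $\bigsqcup_l T_l\subseteq Y$ for the unions of these $d(X)$ top-dimensional pieces, we get $\bigsqcup_l S_l\doteqdot\bigsqcup_l T_l$, and deleting these whole pieces from both sides of $X\sqcup Z\doteqdot Y\sqcup Z$ leaves $(X\setminus\bigsqcup_l S_l)\sqcup Z\doteqdot(Y\setminus\bigsqcup_l T_l)\sqcup Z$. Since a top-dimensional piece of $X$ is dense in a maximal-dimensional component, $X\setminus\bigsqcup_l S_l$ and $Y\setminus\bigsqcup_l T_l$ both have dimension $<n$, and as $\dim Z<n$ as well, the maximum of the three dimensions is $<n=m$; the induction hypothesis gives $X\setminus\bigsqcup_l S_l\doteqdot Y\setminus\bigsqcup_l T_l$, hence $X=\bigsqcup_l S_l\sqcup(X\setminus\bigsqcup_l S_l)\doteqdot\bigsqcup_l T_l\sqcup(Y\setminus\bigsqcup_l T_l)=Y$.

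The content beyond Proposition~\ref{matching} is the choice of induction parameter together with the way Proposition~\ref{matching} is read: taking $m=\max(\dim X,\dim Y,\dim Z)$ is precisely what forces both regimes to decrease $m$, and the crucial observation is that the part deleted from $X$ is piecewise isomorphic to the part deleted from $Y$ while the parts deleted from $Z$ on the two sides literally coincide, so the residual piecewise isomorphism $X^*\sqcup Z^*\doteqdot Y^*\sqcup Z^*$ has strictly smaller parameter and feeds the induction. I expect the only delicate point to be verifying the strict dimension drop for $X^*$, $Y^*$ and $Z^*$, which is exactly where the elementary fact recorded at the start — one dense top-dimensional piece per maximal-dimensional component — is used, together with the fact that the $S'_l,T'_l$ produced by Proposition~\ref{matching} are dense in those pieces.
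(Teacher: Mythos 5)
Your proposal is correct and takes essentially the same route as the paper: reduce via Proposition~\ref{isoGR} to a witness $X\sqcup Z\doteqdot Y\sqcup Z$, invoke Corollary~\ref{dimdeggen}, and then descend, using Proposition~\ref{matching} to shrink $Z$ when $\dim Z\geq\dim X$ and peeling off the matched top-dimensional pieces of $X$ and $Y$ when $\dim Z<\dim X$. The only difference is bookkeeping (a single strong induction on $\max(\dim X,\dim Y,\dim Z)$, carving dense parts out of all three varieties at once, versus the paper's iteration of Proposition~\ref{matching} with $V,W$ fixed and descent on $s+t$), which is immaterial.
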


\begin{proof}
Let $V$ and $W$ be two varieties with $[V]=[W]$ in $\K$. Then Proposition \ref{isoGR} states that there is a variety $Z$ of dimension $t$ and $d(Z)=e$, say, disjoint from both $V$ and $W$, which witnesses this equality, i.e., $V\sqcup Z\doteqdot W\sqcup Z$. Proposition \ref{dimdeggen} then gives $\dim V=\dim W=:s$ and $d(V)=d(W)$.

If $s=0$, then $d(V)=d(W)$ implies $V\doteqdot W$.

If $s>0$, then we describe a procedure to reduce the sum $s+t$ in two different cases.

\textbf{Case I:} Suppose $s\leq t$. Let $d=\begin{cases}d(V)&\mbox{if }s=t,\\0&\mbox{otherwise.}\end{cases}$

The piecewise isomorphism $V\sqcup Z\doteqdot W\sqcup Z$ gives two families $S_1,S_2,\hdots,S_{d+e}$ and $T_1,T_2,\hdots,T_{d+e}$ of irreducible subvarieties of $V\sqcup Z$ and $W\sqcup Z$ respectively of maximal dimension satisfying the hypotheses of Proposition \ref{matching}. Furthermore, we also get that $(V\sqcup Z)\setminus\left(\bigsqcup_{i\in[d+e]}S_i\right)\doteqdot(W\sqcup Z)\setminus\left(\bigsqcup_{i\in[d+e]}T_i\right)$.

If Conjecture \ref{GG} holds, we can apply Proposition \ref{matching} to obtain a dense subvariety $Z_1:=\bigsqcup_{i\in Q}T'_i\subseteq Z$. The other conclusions of the proposition give the following properties:
\begin{itemize}
\item the variety $Z':=Z\setminus Z_1$ witnesses $[V]=[W]$, i.e., $V\sqcup Z'\doteqdot W\sqcup Z'$;
\item $\dim Z'<\dim Z$.
\end{itemize}

The use of Proposition \ref{matching} can be repeated if $\dim Z'\geq s$. Hence the equality $[V]=[W]$ in $\K$ is witnessed by some variety $Z''$ of dimension less than $s$.

\textbf{Case II:} Suppose $s>t$. In this case, the piecewise isomorphism $V\sqcup Z\doteqdot W\sqcup Z$ gives $V'\subset V$, $W'\subset W$ with $\dim V'=\dim W'<s$ such that $V\setminus V'\doteqdot W\setminus W'$ and $V'\sqcup Z\doteqdot W'\sqcup Z$.

The two cases complete the proof that $V\doteqdot W$.
\end{proof}

\section{A presentation for $\G$ under Conjecture \ref{GG}}\label{ggisfree}
We assume that the Conjecture \ref{GG} holds (equivalently, in view of Theorem \ref{GRRMGM}, Question \ref{LLQ} admits a positive answer) for an algebraically closed field $k$ in this section.

For each $n\in\mathbb Z_{\geq0}$ let $\V[n]$ denote the proper class of $k$-varieties of dimension at most $n$. Then $\{\V[n]\}_{n\geq0}$ is a filtration on the objects of $\V$. Further let $S_n$ denote the monoid, under $\sqcup$, of piecewise isomorphism classes of varieties in $\V[n]$ and $H_n$ denote the Grothendieck group associated with $S_n$ for each $n\geq0$. If Conjecture \ref{GG} holds, then $H_n$ is the subgroup of $\K$ generated by $S_n$ and thus, for each $n\in\mathbb Z_{\geq0}$, the natural map $H_n\to H_{n+1}$ is injective.

Let $\mathfrak M$ denote a set of representatives of birational equivalence classes of irreducible varieties. Then $\mathfrak M=\bigsqcup_{n\in\mathbb Z_{\geq0}} \mathfrak M_n$, where $\mathfrak M_n$ is the set of all dimension $n$ varieties in $\mathfrak M$. We use $\mathcal A,\mathcal B$ etc. to denote the elements of $\mathfrak M$.

We say that a variety $A$ of dimension $n$ is \textbf{$\mathfrak M$-admissible} (or just \textbf{admissible}) if it can be embedded into some $\mathcal A\in\mathfrak M_n$. The assignment $A\mapsto\mathcal A$ is a well-defined and dimension preserving map on the class of admissible varieties. Note that every admissible variety has a unique irreducible component of maximal dimension. We say that a \textbf{partition} $D=\bigsqcup_{i\in[m]}D_i$ of a variety $D$ into locally closed subvarieties \textbf{is admissible} if each $D_i$ is admissible. Note that each variety admits an admissible partition.

\begin{theorem}\label{GGISFREE}
Suppose Question \ref{LLQ} admits a positive answer over an algebraically closed field $k$. Let $\mathfrak M$ denote a set of representatives of birational equivalence classes of irreducible $k$-varieties. Then there is a unique group isomorphism $ev_{\mathfrak M}:\G\to\mathbb Z[\mathfrak M]$ satisfying $ev_{\mathfrak M}([\mathcal A])=\mathcal A$ for each $\mathcal A\in\mathfrak M$.
\end{theorem}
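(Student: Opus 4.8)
The plan is to show that $\{[\mathcal A]:\mathcal A\in\mathfrak M\}$ is a free $\mathbb Z$-basis of $\G$; once generation is known, $ev_{\mathfrak M}$ can only be, and then exists as, the unique homomorphism determined by $[\mathcal A]\mapsto\mathcal A$, so uniqueness is automatic. I will argue through the filtration $\G=\bigcup_{n\ge 0}H_n$. Under the standing hypothesis each $S_n$ is cancellative and ``$[A]=[B]$ in $\G$'' is equivalent to ``$A\doteqdot B$'' (this is precisely what Question~\ref{LLQ} asserts, and it also makes the maps $H_{n-1}\hookrightarrow H_n\hookrightarrow\G$ injective, as already noted in the text). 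Hence it suffices to prove that each graded piece $H_n/H_{n-1}$ is free abelian on $\mathfrak M_n$. Granting this, every short exact sequence $0\to H_{n-1}\to H_n\to H_n/H_{n-1}\to 0$ splits, with the splitting of $H_n/H_{n-1}$ given on generators by $[\mathcal A]\mapsto[\mathcal A]$; an induction on $n$ identifies $H_n$ with $\mathbb Z[\mathfrak M_0\sqcup\cdots\sqcup\mathfrak M_n]$ via $[\mathcal A]\mapsto\mathcal A$, compatibly with the inclusions, and passing to the colimit yields $ev_{\mathfrak M}\colon\G\xrightarrow{\ \sim\ }\mathbb Z[\mathfrak M]$.

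For generation of $H_n/H_{n-1}$: every $k$-variety $X$ admits an admissible partition $X=\bigsqcup_i D_i$, and $\eqref{genrelpre}$ gives $[X]=\sum_i[D_i]$; the pieces of dimension $<n$ lie in $H_{n-1}$, while a $\mathfrak M$-admissible variety $D_i$ of dimension $n$ is isomorphic to a dense open subvariety of its image $\mathcal D_i\in\mathfrak M_n$, so $[D_i]\equiv[\mathcal D_i]\pmod{H_{n-1}}$ by another application of $\eqref{genrelpre}$. Thus $[X]\equiv\sum_{\dim D_i=n}[\mathcal D_i]\pmod{H_{n-1}}$, so the classes $[\mathcal A]$, $\mathcal A\in\mathfrak M_n$, generate $H_n/H_{n-1}$, and in particular $\{[\mathcal A]:\mathcal A\in\mathfrak M\}$ generates $\G$.

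For freeness of $H_n/H_{n-1}$: suppose $\sum_{\mathcal A\in\mathfrak M_n}n_{\mathcal A}[\mathcal A]\in H_{n-1}$, say this element equals $[C]-[D]$ with $\dim C,\dim D\le n-1$. Gathering the generators with positive, resp.\ negative, coefficients into disjoint unions $P$, resp.\ $N$, of copies of varieties in $\mathfrak M_n$, this becomes $[P\sqcup D]=[N\sqcup C]$ in $\G$, whence $P\sqcup D\doteqdot N\sqcup C$ by the hypothesis. The essential input is the following strengthening of Proposition~\ref{dimdeg} (valid over any algebraically closed field, with no hypothesis): \emph{for each birational class $\beta$, the number of irreducible components of dimension $n$ belonging to $\beta$ is an invariant of the piecewise-isomorphism type.} Since neither $C$ nor $D$ has an $n$-dimensional component, comparing the two sides class by class gives $\sum_{n_{\mathcal A}>0}n_{\mathcal A}\,\mathcal A$ and $\sum_{n_{\mathcal A}<0}(-n_{\mathcal A})\,\mathcal A$ equal as formal sums over $\mathfrak M_n$; these have disjoint supports, so all $n_{\mathcal A}$ vanish.

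The one genuinely new ingredient, and the step I expect to be the main obstacle, is the invariance claim above. I would prove it by first refining the two partitions witnessing $A\doteqdot B$ so that every piece is an \emph{irreducible} locally closed subvariety — refine the pieces of $A$ arbitrarily into irreducible locally closed subvarieties and transport the refinement to $B$ through the given isomorphisms. Then, for each $n$-dimensional irreducible component $Z$ of $A$, its generic point lies in exactly one piece $A_i$; since $\dim A_i\le n=\dim\overline{\{\eta_Z\}}$ and every point of $A_i$ specializes from its generic point, a dimension count forces $\eta_Z$ to be the generic point of $A_i$, so $\overline{A_i}=Z$ and $A_i$ is birational to $Z$. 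This yields a birational-class-preserving bijection between the $n$-dimensional components of $A$ and the $n$-dimensional pieces $A_i$; the same holds for $B$, and since the pieces of $A$ and $B$ are matched by isomorphisms (hence birational equivalences), the counts agree. Beyond this, the remainder is bookkeeping — the splitting of the filtration, the colimit identification, and the observation that the hypothesis is used only to convert equalities of classes in $\G$ into piecewise isomorphisms (and thereby to see $H_{n-1}\hookrightarrow H_n$).
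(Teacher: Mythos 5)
Your proposal is correct, but it proves the theorem by a genuinely different route from the paper. The paper constructs the evaluation map explicitly, by induction on dimension: it defines $ev^n$ on an admissible variety $A\hookrightarrow\mathcal A$ by the recursion $ev^n(A):=\mathcal A-ev^{n-1}(\mathcal A\setminus f(A))$, checks independence of the embedding, additivity over admissible partitions and invariance under piecewise isomorphism, and then proves injectivity of each $H_n\to\mathbb Z[\mathfrak M]$ by comparing the ``components of different dimensions'' of the images and invoking injectivity of $ev^{n-1}$ inductively. You instead prove directly that $\{[\mathcal A]:\mathcal A\in\mathfrak M\}$ is a $\mathbb Z$-basis, by showing each graded piece $H_n/H_{n-1}$ is free on $\mathfrak M_n$ and splitting the filtration; the linear-independence step is carried by your lemma that the multiset of birational classes of top-dimensional irreducible components is an invariant of piecewise isomorphism, and your generic-point proof of that lemma is sound (a locally closed piece dense in an irreducible component is open in it, so distinct dense pieces in the same component would meet, giving the bijection between top-dimensional pieces and components). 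That lemma is unconditional and does the work that the paper's inductive injectivity argument does, with the hypothesis of Question~\ref{LLQ} entering in exactly the same way in both proofs (to convert equalities of classes into piecewise isomorphisms and to make $H_{n-1}\to H_n$ injective). What each approach buys: the paper's recursion gives an explicit procedure for evaluating $ev_{\mathfrak M}$ on an arbitrary variety (which is what makes the comparison with Bittner's presentation transparent), while your argument is shorter, more structural, and isolates a clean geometric invariant of independent interest. Two small points of wording: an $\mathfrak M$-admissible variety in the paper's sense may have lower-dimensional components, so it need not be isomorphic to a dense open subvariety of its representative, but the congruence $[D_i]\equiv[\mathcal D_i]\pmod{H_{n-1}}$ you actually use still holds; and in the freeness step you should note the degenerate case where exactly one of $P$, $N$ is empty, which is ruled out by the same count (or simply by invariance of dimension under piecewise isomorphism).
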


\begin{proof}
We fix $\mathfrak M$ and drop the subscript $\mathfrak M$ from $ev_{\mathfrak M}$. We inductively define a compatible family of maps $\{ev^n:\V[n]\to\mathbb Z[\mathfrak M]\}_{n\geq0}$, where $ev^n$ factors through an injective group homomorphism $H_n\to\mathbb Z[\mathfrak M]$. By an abuse of notation, we also denote the group homomorphism by $ev^n$.

If $D\in\V[0]$ and $d(D)=d$, then the assignment $D\mapsto d\mathcal U$ clearly factors through an injective group homomorphism $H_0\cong\mathbb Z\to\mathbb Z[\mathfrak M]$, where $\mathcal U$ is the unique variety in $\mathfrak M_0$.

Assume by induction that $ev^{n-1}$ is a well-defined map on $\V[n-1]$ and that it factors through an injective group homomorphism $H_{n-1}\to\mathbb Z[\mathfrak M]$.

If $D\in\V[n-1]$, then define $ev^n(D):=ev^{n-1}(D)$ ensuring compatibility.

Let $A$ be an admissible variety of dimension $n$. Then there is an embedding $f:A\hookrightarrow\mathcal A$ for a unique $\mathcal A\in\mathfrak M_n$. Define $ev^n(A):=\mathcal A-ev^{n-1}(\mathcal A\setminus f(A))$.

To see that this definition does not depend on the choice of an embedding, let $g:A\hookrightarrow\mathcal A$ be another embedding. It will suffice to show that $ev^{n-1}(\mathcal A\setminus f(A))=ev^{n-1}(\mathcal A\setminus g(A))$. Note that the following equations hold in $H_n$.
\begin{eqnarray*}
\,[f(A)]&=&[g(A)],\\
\,[f(A)]+[\mathcal A\setminus f(A)]&=&[g(A)]+[\mathcal A\setminus g(A)].
\end{eqnarray*}
Hence $[\mathcal A\setminus f(A)]=[\mathcal A\setminus g(A)]$ in $H_n$. Under the hypothesis of a positive answer to Question \ref{LLQ}, we conclude the same equation in $H_{n-1}$.

If $\phi:A'\to A$ is a variety isomorphism, then $A'$ is admissible since $A$ is and both of them embed into the same variety $\mathcal A\in\mathfrak M_n$. Choosing an embedding $f$ of $A$ into $\mathcal A$ gives an embedding $f\circ\phi$ of $A'$ into $\mathcal A$. Since $f(A)=f\circ\phi(A')$, we have $ev^n(A)=ev^n(A')$.

If $A=A_1\sqcup A_2$ is a partition of an admissible variety $A$ of dimension $n$ into locally closed subvarieties and $\dim A_1=n$, then $A_1$ is admissible. Further if $f$ is an embedding of $A$ into $\mathcal A$, then using that $ev^{n-1}$ is additive we have
\begin{eqnarray*}
ev^n(A_1)+ev^n(A_2)&=&\mathcal A-ev^{n-1}(\mathcal A\setminus f(A_1))+ev^{n-1}(A_2)\\
&=&\mathcal A-ev^{n-1}((\mathcal A\setminus f(A))\sqcup f(A_2))+ev^{n-1}(A_2)\\
&=&\mathcal A-ev^{n-1}(\mathcal A\setminus f(A))-ev^{n-1}(f(A_2))+ev^{n-1}(A_2)\\
&=&\mathcal A-ev^{n-1}(\mathcal A\setminus f(A))\\
&=&ev^n(A).
\end{eqnarray*}

From the previous two paragraphs it follows that whenever $A$ is admissible and $A\doteqdot B$, then $ev^n(A)=ev^n(B)$.

Now let $D=\bigsqcup_{i\in[m]}D_i$ be an admissible partition of a variety $D$ of dimension $n$. Define $ev^n(D):=\sum_{i\in[m]}ev^n(D_i)$. Any two admissible partitions of $D$ admit a common admissible refinement and, as shown above, the value of $ev^n(D_i)$ does not change under refinements. Thus $ev^n(D)$ is independent of the choice of an admissible partition and is well-defined.

If $D\doteqdot D'$, then we choose partitions $D=\bigsqcup_{i\in[m]}D_i$ and $D'=\bigsqcup_{i\in[m]}D'_i$ such that $D_i$ is isomorphic to $D'_i$ for each $i$. By further refinements, we may as well assume that these partitions are admissible. Then $ev^n(D)=\sum_{i\in[m]}ev^n(D_i)=\sum_{i\in[m]}ev^n(D'_i)=ev^n(D')$.

This completes the proof that the map $ev^n$, uniquely determined by $\mathfrak M$, factors through an additive map $H_n\to\mathbb Z[\mathfrak M]$. It remains to show that $ev^n:H_n\to\mathbb Z[\mathfrak M]$ is injective.

We must show that if $D_1,D_2,D'_1,D'_2\in\V[n]$ satisfy $ev^n([D_1]-[D_2])=ev^n([D'_1]-[D'_2])$, then $[D_1]-[D_2]=[D'_1]-[D'_2]$. This claim can be restated as: $ev^n([D_1]+[D'_2])=ev^n([D_2]+[D'_1])$ implies $[D_1]+[D'_2]=[D_2]+[D'_1]$. Therefore it is sufficient to prove that if $D,D'\in\V[n]$ satisfy $ev^n([D])=ev^n([D'])$, then $[D]=[D']$ in $H_n$.

Let $D,D'\in\V[n]$ be such that $ev^n([D])=ev^n([D'])$. Looking at the ``$n$-dimensional component'' of this element of $\mathbb Z[\mathfrak M]$, we deduce that $\dim D=\dim D'$ and $d(D)=d(D')=:d$.

Suppose $D=\bigsqcup_{i\in[t]}D_i$ is an admissible partition of $D$. Without loss, we may assume that $\dim D_i=n$ if and only if $i\in[d]$. For each $i\in[d]$, let $f_i:D_i\to\mathcal A_i$ be a variety embedding, where $\mathcal A_i\in\mathfrak M_n$, and set $C_i:=\mathcal A_i\setminus f_i(D_i)$. Then $\dim C_i<n$ and $D_i\sqcup C_i\doteqdot\mathcal A_i$. Hence $ev^n([D_i])+ev^{n-1}([C_i])=\mathcal A_i$ for each $i\in[d]$.

Similarly starting with an admissible partition $D'=\bigsqcup_{i\in[s]}D'_i$, where $\dim D'_i=n$ if and only if $i\in[d]$, we obtain $\mathcal A'_i\in\mathfrak M_n$ and $C'_i$ such that $ev^n([D'_i])+ev^{n-1}([C'_i])=\mathcal A'_i$ for each $i\in[d]$. Now
\begin{eqnarray*}
\sum_{i\in[d]}\mathcal A_i+\sum_{i\in[t]\setminus[d]}ev^{n-1}([D_i])&+&\sum_{i\in[d]}ev^{n-1}([C'_i])\\
&=&ev^n([D])+\sum_{i\in[d]}ev^{n-1}([C_i])+ \sum_{i\in[d]}ev^{n-1}([C'_i])\\
&=& ev^n([D'])+\sum_{i\in[d]}ev^{n-1}([C'_i])+\sum_{i\in[d]}ev^{n-1}([C_i])\\
&=& \sum_{i\in[d]}\mathcal A'_i+\sum_{i\in[s]\setminus[d]}ev^{n-1}([D'_i])+\sum_{i\in[d]}ev^{n-1}([C_i]).
\end{eqnarray*}

By comparing the components of different dimensions, we get the following equations.
\begin{eqnarray*}
\sum_{i\in[d]}\mathcal A_i&=&\sum_{i\in[d]}\mathcal A'_i,\\
\sum_{i\in[t]\setminus[d]}ev^{n-1}([D_i])+\sum_{i\in[d]}ev^{n-1}([C'_i])&=&
\sum_{i\in[s]\setminus[d]}ev^{n-1}([D'_i])+\sum_{i\in[d]}ev^{n-1}([C_i]).
\end{eqnarray*}

The first equation gives that the list $\mathcal A_1,\mathcal A_2,\hdots,\mathcal A_d$ is the same as the list $\mathcal A'_1,\mathcal A'_2,\hdots,\mathcal A'_d$. Since the map $ev^{n-1}$ is injective, the second equation gives $\sum_{i\in[t]\setminus[d]}[D_i]+\sum_{i\in[d]}[C'_i]= \sum_{i\in[s]\setminus[d]}[D'_i]+\sum_{i\in[d]}[C_i]$ in $H_{n-1}$. Combining these, we obtain $[D]+\sum_{i\in[d]}[C_i]+\sum_{i\in[d]}[C'_i]= [D']+\sum_{i\in[d]}[C'_i]+\sum_{i\in[d]}[C_i]$ in $H_n$. Since $H_n$ is a group, cancelling common terms from both sides gives $[D]=[D']$. This completes the proof of injectivity of $ev^n$.

Define the map $ev:\V\to\mathbb Z[\mathcal M]$ by $ev(D):=ev^n(D)$ whenever $D\in\V[n]$. Compatibility of the family $\{ev^n\}$ gives that the map $ev$ is well-defined and factors through $\G$ to give an injective map $\G\to\mathbb Z[\mathfrak M]$.

Since, given $C,D\in\V$, there exists $n$ such that $C,D\in\V[n]$, the additivity of $ev^n:H_n\to\mathbb Z[\mathfrak M]$ for each $n$ implies that $ev:\G\to\mathbb Z[\mathfrak M]$ is a group homomorphism. The image of $ev$ generates the group $\mathbb Z[\mathfrak M]$ since the image of $\mathfrak M\subseteq\V$ generates the codomain. Hence $\G\cong\mathbb Z[\mathfrak M]$.
\end{proof}

In characteristic $0$, Hironaka's theorem on resolution of singularities allows us to choose a set $\mathfrak M$ of smooth representatives of birational equivalence classes in the theorem above. Therefore it is easy to see that the above theorem subsumes both presentations for $\G$ of Theorem \ref{Bittner} whenever $k$ is an algebraically closed field of characteristic $0$.

\section{The associated graded ring of $\K$}\label{monoidring}
We continue to work under the hypothesis of a positive answer to question \ref{LLQ} over an algebraically closed field of characteristic $0$ in this section. Under this hypothesis, the usual dimension function factorizes through the the Grothendieck group. 

Two varieties $X$ and $Y$ of dimension $n$ are birational if and only if there are open subvarieties $X'\subseteq X$ and $Y'\subseteq Y$ such that $X'\cong Y'$ if and only if $\dim([X]-[Y])=\dim([X\setminus X']-[Y\setminus Y'])<n$, where $[X]$ denotes the class of the variety $X$ in $\K$.

In general the product of two varieties in $\mathfrak M$ is birational (but not necessarily equal) to a variety in $\mathfrak M$. This suggests looking at the structure of the associated graded ring of $\K$, where the grading on $\K$ is induced by dimension. Let $\{F_n\}_{n\geq 0}$ be the filtration on $\K$ induced by dimensions and let $\mathfrak G$ denote the associated graded ring of $\K$ with respect to this filtration.

The construction of $\mathfrak G$ is as follows. Set $F_{-1}:=\{0\}$ for technical purposes. Let $G_n:=F_n/F_{n-1}$ for each $n\geq 0$ and let $\mathfrak G$ denote the abelian group $\bigoplus_{n\geq 0}G_n$. There are multiplication maps $G_n\times G_m\to G_{n+m}$ defined by $(x+F_{n-1})(y+F_{m-1})=xy+F_{n+m-1}$ for each $n,m\geq 0$. These maps combine to give a multiplication structure on $\mathfrak G$.

Let $\mathfrak B_n$ denote the set of birational equivalence classes of irreducible varieties of dimension $n$ and let $\mathfrak B:=\bigsqcup_{n\geq 0}\mathfrak B_n$. The set $\mathfrak B$ carries a monoid structure induced by the multiplication of varieties, where the class of a singleton acts as the identity. The usual dimension function on varieties factors through $\mathfrak B$.

\begin{theorem}
Suppose Question \ref{LLQ} admits a positive answer over an algebraically closed field $k$. The associated graded ring $\mathfrak G$ of $\K$ with respect to the dimension grading is the monoid ring $\mathbb Z[\mathfrak B]$, where $\mathfrak B$ is the multiplicative monoid of birational equivalence classes of irreducible varieties.
\end{theorem}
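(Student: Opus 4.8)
\medskip

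The plan is to build on the group isomorphism $ev_{\mathfrak M}\colon\G\to\mathbb Z[\mathfrak M]$ of Theorem \ref{GGISFREE}: I will show it is filtered, carrying the dimension filtration $\{F_n\}$ on $\K$ to the evident ``degree'' filtration on $\mathbb Z[\mathfrak M]$, so that it induces an isomorphism on associated graded abelian groups; the monoid-ring multiplication will then drop out of the behaviour of products of irreducible varieties modulo strictly lower dimension.

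First I would note that, under our hypothesis, $F_n=H_n$: the group $F_n$ is by definition generated by the classes $[X]$ with $\dim X\le n$, i.e. by $S_n$, and a positive answer to Question \ref{LLQ} makes $H_n\to\K$ injective with exactly this image (Section \ref{ggisfree}). Examining the inductive construction of $ev^n$ in the proof of Theorem \ref{GGISFREE}, one sees that $ev^n$ takes values in the subgroup $\mathbb Z[\mathfrak M_{\le n}]$ spanned by $\mathfrak M_{\le n}:=\bigsqcup_{m\le n}\mathfrak M_m$ --- each step of the recursion subtracts only classes of strictly lower-dimensional varieties --- while conversely $ev^n(\mathcal A)=\mathcal A$ for every $\mathcal A\in\mathfrak M_m$ with $m\le n$. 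Since $ev:=ev_{\mathfrak M}$ is injective, this forces $ev(F_n)=\mathbb Z[\mathfrak M_{\le n}]$ for all $n$. As $\mathfrak M$ is a $\mathbb Z$-basis of $\mathbb Z[\mathfrak M]$, the induced map $G_n=F_n/F_{n-1}\to\mathbb Z[\mathfrak M_{\le n}]/\mathbb Z[\mathfrak M_{\le n-1}]\cong\mathbb Z[\mathfrak M_n]$ is an isomorphism of free abelian groups, and summing over $n$ yields an isomorphism $\Theta\colon\mathfrak G\to\bigoplus_{n\ge 0}\mathbb Z[\mathfrak M_n]=\mathbb Z[\mathfrak M]$ of graded abelian groups.

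It remains to identify the ring structures. Sending each $\mathcal A\in\mathfrak M_n$ to its birational class gives a bijection $\mathfrak M_n\to\mathfrak B_n$ (birational irreducible varieties have the same dimension), hence a group isomorphism $\beta\colon\mathbb Z[\mathfrak M]\to\mathbb Z[\mathfrak B]$, and I claim $\beta\circ\Theta$ is a ring isomorphism. Both $\mathfrak G$ and $\mathbb Z[\mathfrak B]$ are generated as abelian groups by the images of the basis elements, so by distributivity it suffices to compare a product of two of them. For $\mathcal A\in\mathfrak M_n$ and $\mathcal B\in\mathfrak M_m$, the product of $[\mathcal A]+F_{n-1}\in G_n$ and $[\mathcal B]+F_{m-1}\in G_m$ is $[\mathcal A\times_k\mathcal B]+F_{n+m-1}\in G_{n+m}$. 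Here enters the only geometric input: since $k$ is algebraically closed, $\mathcal A\times_k\mathcal B$ is irreducible of dimension $n+m$; letting $\mathcal C\in\mathfrak M_{n+m}$ represent its birational class, the two varieties share a dense open subvariety $U$, and applying relation \eqref{genrelpre} to $U\subseteq\mathcal A\times_k\mathcal B$ and to $U\subseteq\mathcal C$ shows $[\mathcal A\times_k\mathcal B]-[\mathcal C]$ is a $\mathbb Z$-combination of classes of varieties of dimension $<n+m$, hence lies in $F_{n+m-1}$. Thus the product equals $[\mathcal C]+F_{n+m-1}$, which $\beta\circ\Theta$ carries to the birational class of $\mathcal C$, i.e. to the product in $\mathfrak B$ of the classes of $\mathcal A$ and $\mathcal B$ --- precisely the monoid-ring multiplication. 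Since $[\mathrm{Spec}\,k]+F_{-1}$ maps to the birational class of a point, the monoid identity of $\mathfrak B$, we conclude $\mathfrak G\cong\mathbb Z[\mathfrak B]$ as rings.

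The one point needing genuine care is the filtration compatibility of $ev$ --- equivalently, that the associated graded of the isomorphism of Theorem \ref{GGISFREE} is again an isomorphism; after that everything reduces to the short observation that the product of two irreducible varieties is, modulo strictly lower dimension, the representative of its birational class, which uses only \eqref{genrelpre} and the irreducibility of products over an algebraically closed field.
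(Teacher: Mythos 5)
Your proposal is correct and follows essentially the same route as the paper: transfer the dimension filtration through the isomorphism of Theorem \ref{GGISFREE}, identify $G_n$ with $\mathbb Z[\mathfrak M_n]$, pass to $\mathbb Z[\mathfrak B]$ via the canonical bijection $\mathfrak M\to\mathfrak B$, and verify multiplicativity on basis elements using irreducibility of $\mathcal A\times_k\mathcal B$ over an algebraically closed field. You merely make explicit two points the paper leaves implicit, namely that $ev(F_n)=\mathbb Z[\mathfrak M_{\le n}]$ and that $[\mathcal A\times_k\mathcal B]-[\mathcal C]\in F_{n+m-1}$ via a common dense open subvariety and \eqref{genrelpre}.
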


\begin{proof}
Since Question \ref{LLQ} admits a positive answer over $k$, we can use the group isomorphism of Theorem \ref{GGISFREE} induced by the evaluation map to define a multiplicative structure on $\mathbb Z[\mathfrak M]$. By an abuse of notation, we will say that $\{F_n\}_{n\geq 0}$ is a filtration on $\mathbb Z[\mathfrak M]$ and $\mathfrak G$ is its associated graded ring.

Let $\mathcal A\mapsto[[\mathcal A]]$ denote the canonical bijection $\mathfrak M\to\mathfrak B$, which takes an irreducible variety to its birational equivalence class. This clearly extends to a group isomorphism $\Phi:\mathfrak G\to\mathbb Z[\mathfrak B]$. We show that $\Phi$ also preserves multiplication.

Given $\mathcal A\in\mathfrak M_n$ and $\mathcal B\in\mathfrak M_m$, the product $\mathcal A\times_{\mathrm{Spec}\,k} \mathcal B$ is irreducible and thus is birational to a unique $\mathcal C\in\mathfrak M_{n+m}$. In other words, $(\mathcal A+F_{n-1})\cdotp(\mathcal B+F_{m-1})=\mathcal C+F_{n+m-1}$ in $\mathfrak G$. We also have $[[\mathcal C]]=[[\mathcal A\times_{\mathrm{Spec}\,k}\mathcal B]]=[[\mathcal A]][[\mathcal B]]$ in the monoid $\mathfrak B$. Hence $\Phi((\mathcal A+F_{n-1})\cdotp(\mathcal B+F_{m-1}))=\Phi(\mathcal A+F_{n-1})\cdotp\Phi(\mathcal B+F_{m-1})$. This shows that $\Phi$ preserves multiplication on the image of $\mathfrak M$ in $\mathfrak G$. It is routine to verify that $\Phi$ is multiplicative on the whole of $\mathfrak G$.
\end{proof}

\section{Further Remarks}
Recall that an element $a$ of a ring $R$ is said to be \textbf{regular} if it is not a zero divisor in $R$. The following question is important for better understanding of the Grothendieck ring and is open even in the case of algebraically closed fields.
\begin{que}
Let $k$ be a field of characteristic $0$. Is $\mathbb L:=[\mathbb A^1_k]$ a regular element of $\K$?
\end{que}
Lemma 4.8 and Proposition 4.9 in \cite{Seb} connect this question to Question \ref{LLQ} in special cases, but no further development has been made.

The model-theoretic Grothendieck ring $\mathrm{K}_0(k)$ of an algebraically closed field $k$, as defined in \cite{Kra}, is a quotient of $\K$. It is natural to ask the following question.
\begin{que}
Is the model-theoretic Grothendieck ring $\mathrm{K}_0(k)$ isomorphic to $\K$ for an algebraically closed field $k$?
\end{que}

\subsection*{Acknowledgements}
I would like to thank Prof.~Mike Prest for discussions and for careful reading of the paper. I would also like to thank Profs.~Thomas Scanlon, Fran\c{c}ois Loeser and Julien Sebag for useful comments on the first draft of the paper. I am grateful to Adam Biggs and Andrew Davies for discussions on Grothendieck rings and algebraic geometry.

\end{document}